\numberwithin{equation}{section}
\theoremstyle{plain}
\newtheorem{theorem}{Theorem}[section]
\newtheorem{corollary}[theorem]{Corollary}
\newtheorem{lemma}[theorem]{Lemma}
\newtheorem{prop}[theorem]{Proposition}
\newtheorem{ques}[theorem]{Question}
\theoremstyle{definition}
\newtheorem{defn}[theorem]{Definition}
\newtheorem{remark}[theorem]{Remark}
\newtheorem{example}[theorem]{Example}
\newtheorem{condition}[theorem]{Nonsingularity Condition}
\newtheorem{obsr}[theorem]{Observation}
\def \begineq{\begin{equation}}
\def \endeq{\end{equation}}
\def \bb{\mathbb}
\def \CC{{\bb{C}}}
\def \NN{{\bb{N}}}
\def \RR{{\bb{R}}}
\def \ZZ{{\bb{Z}}}
\def \({\left(}
\def \){\right)}
\def \<{\langle}
\def \>{\rangle}
\def \bar{\overline}
\def \tensor{\otimes}
\def \id{{\rm id}}
\begin{document}

\title{A new construction of lens spaces}

\author[S. Sarkar]{Soumen Sarkar}

\address{Department of Mathematics and Statistics, University of Calgary, Calgary T2N 1N4, Canada}

\email{soumensarkar20@gmail.com}

\author[D. Y. Suh]{Dong Youp Suh}

\address{Department of Mathematical Sciences, KAIST, Daejeon 305701, Republic of Korea}

\email{dysuh@math.kaist.ac.kr}

\thanks{Dong Youp Suh is supported in part by Basic Science Research Program through the National Research Foundation of Korea (NRF) funded by the Ministry of Education (2013R1A1A2007780).}

\subjclass[2000]{55N22, 57R90}

\keywords{torus action, lens space, cobordism}

\abstract  Let $T^n$ be the real $n$-torus group. We give a new definition of lens spaces and study the diffeomorphic classification of lens spaces.
 We show that any $3$-dimensional  lens space $L(p; q)$ is $T^2$-equivariantly cobordant to zero.
We also give some sufficient conditions for higher dimensional lens spaces $L(p; q_1, \ldots, q_n)$ to be $T^{n+1}$-equivariantly cobordant to zero. In 2005, B. Hanke showed that complex equivariant cobordism class of a lens space is trivial. Nevertheless, our proofs are constructive using toric topological arguments.
 
\endabstract

\maketitle

\section{Introduction}\label{intro}
Constructing 3-dimensional lens spaces by surgery on knots was discussed in the paper of Fintushel and Stern \cite{FS}. Inspired by \cite{FS}, one may ask if there is another way to construct any dimensional lens spaces. To answer this question we introduce the hyper-characteristic function on an $n$-simplex to give new construction of a $(2n+1)$-dimensional lens space. This new definition of lens spaces helps to determine the explicit cobordism of many lens spaces.   

Cobordism was first introduced by Lev Pontryagin in his seminal work on manifolds,  \cite{Pon}.
In early 1950's Ren\'{e} Thom \cite{Tho} showed that cobordism groups could be computed through
homotopy theory using the Thom construction, and now we know the oriented, non-oriented and complex
cobordism rings completely. On the other hand, even though there have been a lot of developments,
the equivariant cobordism rings are not determined for any nontrivial groups. Part of the reason
is that the Thom transversality theorem does not hold in equivariant category, and hence the equivariant
cobordism can not be reduced to homotopy theory.

In this article we study equivariant cobordism of lens spaces. In particular, Theorem~\ref{eqcob4}
gives a sufficient condition for a $(2n+1)$-dimensional lens space $L(p;q_1,\ldots,q_n)$ to be a
$T^{n+1}$-equivariant boundary, where $T^{n+1}$ is the rank $n+1$ real torus group. In particular,
Corollary~\ref{main corollary} shows that if any two integers of $p,q_1\ldots,q_n$ are relatively
prime and $q_1, \ldots, q_n$ satisfy the sufficient condition of Theorem \ref{eqcob4}, then  $L(p;q_1,\ldots,q_n)$  is a $T^{n+1}$-equivariant boundary. Moreover, we can explicitely construct a $(2n+2)$-dimensional oriented $T^{n+1}$-manifold which bounds $L(p; q_1, \ldots, q_n)$. 

The main tool to get such results is the theory of quasitoric manifolds, which was introduced by Davis
and Januskiewicz in the pioneering work \cite{DJ}. A quasitoric manifold is a closed $2n$-dimensional manifold $M$ with a
locally standard $T^n$ action whose orbit space has the structure of an $n$-dimensional simple convex
polytope $P$. Each codimension one face $F$ of  $P$, called a {\em facet}, corresponds to codimension
two submanifolds fixed by a circle subgroup $S^1_F$ of $T^n$, and such information is recorded as a
{\em characteristic function}
$$
\lambda\colon \mathcal F(P)\to \mathbb Z^n
$$
defined up to sign. Here $\mathcal F(P)$ denotes the set of facets of $P$. This characteristic function
must satisfy the following {\em nonsingularity condition}.


\begin{condition}
If the intersection $F_1\cap\cdots\cap F_\ell$ of $\ell$ facets of $P$ is an $(n-\ell)$-dimensional face of $P$, then 
the integral vectors $\lambda(F_1), \ldots \lambda(F_\ell)$ form a part of basis of $\mathbb Z^n$ for all $\ell=1,\ldots,n$
\end{condition}

Conversely, for a given $n$-dimensional simple polytope $P$ and a function $\lambda\colon \mathcal F(P)\to \mathbb Z^n$
satisfying the above nonsingularity condition, a quasitoric manifold $M(P,\lambda)$ with $P$ as its orbit space and
$\lambda$ as its characteristic function can be constructed, see 1.5 in \cite{DJ}. Indeed, the associated quasitoric manifold is the quotient space
\begin{equation}\label{DJ-construction}
M(P,\lambda)=T^n\times P/\sim,
\end{equation}
where $\sim$ is the equivalence relation defined by
$$
(t,p)\sim (s,q) \Leftrightarrow p=q\, \textrm{ and} \,\, ts^{-1}\in T_F.$$
Here, $F$ is the unique face of $P$ containing $x$ in its relative interior, and if $F$ is the intersection $F_1\cap \cdots
\cap F_\ell$ of $\ell$ facets, then $T_F$ is the torus subgroup corresponding to the subgroup of $\mathbb Z^n$ generated by
the vectors $\lambda(F_1),\ldots, \lambda(F_\ell)$.

In Section\ref{quotient space}, we modify the definition of characteristic function to a {\em hyper characteristic function}
$$
\xi\colon \mathcal F(\Delta^n)\to \mathbb Z^{n+1}
$$
defined on the set of facets of the $n$-simplex $\Delta^n$. Note that the rank of the target group is $n+1$, instead
of $n$. Then, by  similar construction to (\ref{DJ-construction}), we can construct a $(2n+1)$-dimensional $T^{n+1}$-manifold
$$
L(\Delta^n, \xi)=T^{n+1}\times \Delta^n/\sim
$$
which is called a {\em generalized lens space}.

Let $p>0,\, q_1,\dots,q_n$ be integers such that $p$ and $q_i$ are relatively prime for $i=1,\ldots, n$.
Let $f_0,\ldots, f_n$ be the facets of $\Delta^n$, and consider a hyper characteristic function $\xi$ defined by
\begin{eqnarray*}
\xi(f_0)&=(-q_1,\ldots,-q_n, p),  \,\textrm{ and}\\
\xi(f_i)&=e_i, \,\textrm{ for} \,i=1,\, \ldots, n,
\end{eqnarray*}
where $e_i$ is the standard $i$th basis vector of $\ZZ^{n+1}$. Then the generalized lens space $L(\Delta^n, \xi)$
is exactly the usual $(2n+1)$-dimensional lens space $L(p;q_1,\ldots,q_n)$, which gives an alternative construction of
a $(2n+1)$-dimensional lens space using the technique of toric topology.

Let $L(p;q_1,\ldots,q_n)=L(\Delta^n,\xi)$ be a (2n+1)-dimensional lens space for a hyper characteristic function
$\xi$ defined as above. Now consider the $(n+1)$-simplex $\Delta^{n+1}$, and regard $\Delta^n$ as a facet
of it.  We would like to extend $\xi$ to a {\em rational characteristic function}
$$
\eta\colon \mathcal (\Delta^{n+1})\to \mathbb Z^{n+1},
$$
i.e., $\eta$ satisfy the nonsingularity condition for $\ell=1,\ldots, n-1$ so that
\begin{eqnarray*}
\eta(F_0)&=\xi(f_0)=(-q_1,\ldots,-q_n, p),  \,\textrm{and}\\
\eta(F_i)&=\xi(f_i)=e_i,\, \textrm{ for} \,i=1,\ldots, n,
\end{eqnarray*}
where $F_i$ is the facet containing $f_i$ and not equal to $\Delta^n$.
Then the space $$M=(T^{n+1}\times \Delta^{n+1})/\sim$$ constructed similarly to (\ref{DJ-construction}) is, in general,
an orbifold with singularities occurring at the points corresponding to the vertices of $\Delta^{n+1}$.

Now consider the {\em vertex-cut} $\Delta^{n+1}_V$ of $\Delta^{n+1}$, i.e., cutting off a small disjoint
$(n+1)$-simplex-shaped neighborhoods from each vertex of $\Delta^{n+1}$.  Then
$$
W:=\pi^{-1}(\Delta_V^{n+1})=T^{n+1}\times \Delta^{n+1}_V/\sim\,  \subset M
$$
is a $(2n+2)$-dimensional manifold with boundary, which consists of $(2n+1)$ dimensional lens spaces, and in particular
one of the boundary components is the lens space $L(p;q_1,\ldots,q_n)$. Here $$\pi:(T^{n+1}\times \Delta^{n+1})/\sim\, \to \Delta^{n+1}$$ is the map induced from the projection.  If the numbers $p, q_1, \ldots, q_n$ satisfy certain
number theoretical condition, then we can continue the similar procedure to the other boundary components to show that they are equivariant boundaries, and hence the lens space $L(p;q_1,\ldots,q_n)$ is an equivariant boundary. This is how we get the main results of this article, see Theorem \ref{eqcob3} and \ref{eqcob4}. In 2005, B. Hanke showed that complex equivariant cobordism class of a lens space is trivial, see Theorem 1 in \cite{Han}. Nevertheless, our proofs are combinatorial and constructive. A nice application of this construction can be found in Section 2 of \cite{BSS}.

\section{Some quotient spaces of odd dimensional spheres and lens spaces}\label{quotient space}
In this section, first we recall the known definition of lens spaces. Then we give another description of $(2n+1)$-dimensional lens space $L(p;q_1,\ldots, q_n)$, and find some sufficient condition for two $(2n+1)$-dimensional  lens spaces to be diffeomorphic using toric topological techniques.

Let $p>0, q_1,\ldots, q_n$ be integers such that $p$ and $q_i$ are relatively prime for all $i=1,\ldots, n$.
The {\em  $(2n+1)$-dimensional lens space}  $L(p;q_1,\ldots, q_n)$  is the orbit space $S^{2n+1}/\mathbb Z_p$ where $\mathbb Z_p$ action on
$S^{2n+1}$  is defined by
$$\theta\colon \mathbb Z_p\times S^{2n+1}\to S^{2n+1}$$
$$(k, (z_1,\ldots, z_n))\mapsto (e^{2kq_1\pi \sqrt{-1}/p}z_1, \ldots, e^{2kq_n\pi \sqrt{-1}/p}z_n, e^{2k\pi \sqrt{-1}/p}z_{n+1})$$
where $S^{2n+1}=\big\{(z_1,\ldots, z_{n+1})\in \mathbb C^{n+1} ~~\big|~~ |z_1|^2+\cdots + |z_{n+1}|^2=1 \big\}$.

There is an alternative description of a $3$-dimensional lens space $L(p; q)$  as the result of gluing
two solid tori via an appropriate homeomorphism of
their boundaries, see  \cite{OR}. Moreover it is shown in
(\cite{Br2}) that $L(p; q)$ is homeomorphic
to $L(p; r)$ if and only if $$r \equiv \pm q ~ (\rm{mod} ~p) ~\mbox{or} ~ qr \equiv \pm 1 ~ (mod ~p).$$

Now we give new construction of lens spaces. 
Let $\Delta^n=V_0V_1\cdots V_n$ be an $n$-dimensional simplex with vertices $V_0,\ldots, V_n$.
Let $F_i$ be the facet of $\Delta^n$ which does not contain $V_i$, and let $\mathcal F(\Delta^n)$ donate the set
$\{F_0,\ldots, F_n\}$ of facets of $\Delta^n$.

\begin{defn}
A function $\xi: \mathcal{F}(\Delta^n) \to \ZZ^{n+1}$ is called a $hyper$ $characteristic$ function on
$\Delta^n$ if whenever $F_{i_1} \cap F_{i_2} \cap \ldots \cap F_{i_\ell}$ is nonempty, the submodule generated by
$\{\xi(F_{i_1}), \ldots, \xi(F_{i_\ell})\}$ is a direct summand of $\ZZ^{n+1}$ of rank $\ell$. For notational convenience,
let us denote $\xi(F_i)$ by $\xi_i$ for $i=0, \ldots, n$.
\end{defn}

\begin{example}
Some hyper characteristic functions for triangles are given in Figure \ref{len901}.
One can show that the submodules generated by $\{(0,2,3), (4,1,0)\}$, $\{(4,1,0), (3,2,4)\}$ and $\{(3,2,4), (0,2,3)\}$ are direct summands of $\ZZ^3$. We can check that the function given in Figure \ref{len901} $(b)$ is a hyper characteristic function if and only if $p$ is relatively prime to each $q_1$ and $q_2$.

\begin{figure}[ht]
\centerline{
\scalebox{0.80}{
\input{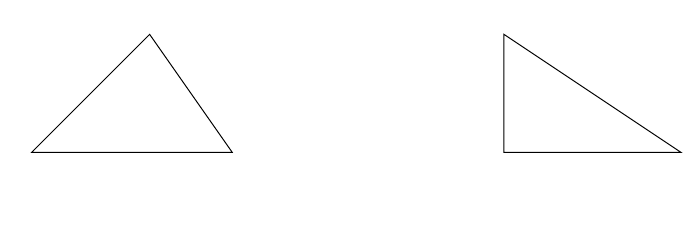_t}
 }
 }
\caption {Some hyper characteristic functions of triangle.}
\label{len901}
\end{figure}
\end{example}

Of course a hyper characteristic function can be defined for more general simple convex polytope $P$,
but we only consider the simplex case here.

Let $ F $ be a face of $ \Delta^n $ of codimension $ \ell $. Then $ F $ is the intersection
of a unique collection of $\ell$ facets $F_{i_1}, F_{i_2}, \ldots, F_{i_\ell}$ of $ \Delta^n $.
Let $ T_F $ be the torus subgroup of $ T^{n+1}$ corresponding to the submodule generated by
$ \{\xi_{i_1}, \xi_{i_2}, \ldots, \xi_{i_\ell} \}$ of $ \ZZ^{n+1} $. Fix $T_{\Delta^n} = 1$. We define an equivalence relation
$\sim$ on the product $ T^{n+1} \times \Delta^n $ as follows.
\begin{equation}\label{eqich}
(t, x) \sim (u, y) \quad \mbox{if and only if} \quad  x = y ~ \mbox{and} ~ tu^{-1} \in T_{F}
\end{equation}
where $ F \subset \Delta^n$ is the unique face containing $ x $ in its relative interior.
We denote the quotient space $ (T^{n+1} \times \Delta^n)/ \sim $ by $ L(\Delta^n, \xi)$.
Let $\ZZ(\Delta^n, \xi) \subset \ZZ^{n+1}$ be the submodule generated by $\xi_0, \ldots, \xi_{n}$,
and let  $$\ZZ_{\xi}(\Delta^n) := \ZZ^{n+1}/ \ZZ(\Delta^n, \xi).$$
Note that the rank of $\ZZ(\Delta^n, \xi)=n$ or $n+1$, since $\xi$ is a hyper characteristic function.

\begin{prop}\label{genlensp}
 The quotient space $ L(\Delta^n, \xi) $ is a $(2n+1)$-dimensional topological
manifold  with the natural effective action of $T^{n+1}$ induced from the group operation on the first factor of $T^{n+1}\times \Delta^n$. Furthermore,
\begin{enumerate}
\item if the rank of $\ZZ(\Delta^n, \xi)$ is $n$ then $L(\Delta^n,\xi)$ is homeomorphic to 
$ S^1\times\mathbb CP^n$, and
\item if the rank of $\ZZ(\Delta^n, \xi)$ is $n+1$ then $L(\Delta^n,\xi)$ is homeomorphic to 
$ S^{2n+1}/\ZZ_{\xi}(\Delta^n)$.
      In particular, if $\{\xi_0,\ldots, \xi_n\}$ form a basis for $\ZZ^{n+1}$ then  $L(\Delta^n,\xi)$
is homeomorphic to $S^{2n+1}$.
\end{enumerate}
\end{prop}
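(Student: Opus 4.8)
The plan is to establish everything from the local structure of the quotient, and then to split into the two rank cases.

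\emph{Manifold and effectiveness.} For the first assertion I would study $L(\bigtriangleup^n,\xi)$ near a point $y$ with $\pi(y)=x$, where $x$ lies in the relative interior of a face $F=F_{i_1}\cap\cdots\cap F_{i_\ell}$ of codimension $\ell\le n$ (every face of $\bigtriangleup^n$ has this form). Choosing a neighbourhood $U$ of $x$ in $\bigtriangleup^n$ homeomorphic to $\RR^\ell_{\ge 0}\times\RR^{n-\ell}$ (with the $\RR^\ell_{\ge 0}$-directions transverse to $F_{i_1},\dots,F_{i_\ell}$), the hyper characteristic condition for $\ell\le n$ says $\xi_{i_1},\dots,\xi_{i_\ell}$ form part of a basis of $\ZZ^{n+1}$; completing it to a basis and writing $T^{n+1}\cong T^\ell\times T^{n+1-\ell}$ in that basis, the relation $\sim$ over $U$ collapses exactly the coordinate subtori of the $T^\ell$-factor over the coordinate faces of $\RR^\ell_{\ge0}$. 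Hence $\pi^{-1}(U)\cong \CC^\ell\times T^{n+1-\ell}\times\RR^{n-\ell}$, a $(2n+1)$-dimensional manifold, via polar coordinates. This makes $L(\bigtriangleup^n,\xi)$ locally Euclidean; Hausdorffness and second countability are routine since $\sim$ is a closed relation on a compact product. The $T^{n+1}$-action by translation on the first factor descends continuously, and it is effective because any $y$ with $\pi(y)$ in the interior of $\bigtriangleup^n$ has trivial stabilizer.

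\emph{Case (1): $\mathrm{rk}\,\ZZ(\bigtriangleup^n,\xi)=n$.} Since $F_0\cap\cdots\cap F_{n-1}$ is a vertex, $\langle\xi_0,\dots,\xi_{n-1}\rangle$ is a rank-$n$ direct summand of $\ZZ^{n+1}$, hence saturated; as $\ZZ(\bigtriangleup^n,\xi)$ has the same rank and contains it, they coincide, so $\ZZ(\bigtriangleup^n,\xi)$ is itself a rank-$n$ direct summand. Fixing a splitting $\ZZ^{n+1}\cong \ZZ(\bigtriangleup^n,\xi)\oplus\ZZ$ and the corresponding $T^{n+1}\cong T_{\ZZ(\bigtriangleup^n,\xi)}\times S^1$, every $\xi_i$ lies in the first summand, so every $T_F$ lies in $T_{\ZZ(\bigtriangleup^n,\xi)}\times\{0\}$ and $\sim$ forces the $S^1$-coordinates to agree; therefore $L(\bigtriangleup^n,\xi)\cong\big((T_{\ZZ(\bigtriangleup^n,\xi)}\times\bigtriangleup^n)/\sim\big)\times S^1$. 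A modular-law argument shows that, viewed in $\ZZ(\bigtriangleup^n,\xi)\cong\ZZ^n$, the vectors $\xi_0,\dots,\xi_n$ satisfy the usual nonsingularity condition, so the first factor is a quasitoric manifold over $\bigtriangleup^n$; normalizing the characteristic vectors by $\mathrm{GL}(n,\ZZ)$ and signs one reaches the standard characteristic function of $\mathbb CP^n$, giving $L(\bigtriangleup^n,\xi)\cong S^1\times\mathbb CP^n$.

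\emph{Case (2): $\mathrm{rk}\,\ZZ(\bigtriangleup^n,\xi)=n+1$.} Write $L:=\ZZ(\bigtriangleup^n,\xi)$, a finite-index sublattice, so $\ZZ_\xi(\bigtriangleup^n)=\ZZ^{n+1}/L$ is finite. Let $\widetilde L$ be the space built exactly as $L(\bigtriangleup^n,\xi)$ but with the torus $T':=\RR^{n+1}/L\cong T^{n+1}$ in place of $T^{n+1}=\RR^{n+1}/\ZZ^{n+1}$. Since the $\xi_i$ generate $L$ and any $n$ of them are part of a basis of $\ZZ^{n+1}$, hence of $L$, a $\mathrm{GL}(n+1,\ZZ)$-and-sign normalization shows $\xi\colon\mathcal F(\bigtriangleup^n)\to L$ is equivalent to $F_i\mapsto e_i$, for which the construction produces precisely $S^{2n+1}$ with its coordinatewise $T^{n+1}$-action; so $\widetilde L\cong S^{2n+1}$. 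The inclusion $L\hookrightarrow\ZZ^{n+1}$ induces a covering $T'\to T^{n+1}$ with deck group $\ZZ_\xi(\bigtriangleup^n)$, acting on $T'$ by translations, hence on $T'\times\bigtriangleup^n$; this action preserves $\sim$ (translations fix $t(t')^{-1}$) and descends to $\widetilde L$. I would check this descended action is free: if $[v]$, $v\in\ZZ^{n+1}$, fixes the class of $(t,x)$ then $[v]$ lies in the subtorus $T'_F$ attached to $F=F_{i_1}\cap\cdots\cap F_{i_\ell}$; because the submodule $S=\langle\xi_{i_1},\dots,\xi_{i_\ell}\rangle$ is generated by at most $n$ of the $\xi_i$, it is a direct summand of $\ZZ^{n+1}$, hence saturated, and one deduces $v\in\Span_\RR(S)\cap\ZZ^{n+1}=S\subseteq L$, i.e. $[v]=0$. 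Finally, passing to the $\ZZ_\xi(\bigtriangleup^n)$-quotient replaces $T'$ by $T^{n+1}$ and, again by saturation of each such $S$ in $\ZZ^{n+1}$, converts $\sim$ into exactly the defining relation of $L(\bigtriangleup^n,\xi)$; thus $L(\bigtriangleup^n,\xi)\cong S^{2n+1}/\ZZ_\xi(\bigtriangleup^n)$, and when $\{\xi_i\}$ is a basis of $\ZZ^{n+1}$ one has $L=\ZZ^{n+1}$, so the quotient is $S^{2n+1}$ itself.

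The local-model computation and the Hausdorff/second-countability bookkeeping are routine. The substantive points are the two normalizations, which pin down $\mathbb CP^n$ and $S^{2n+1}$, and, above all, the freeness of the deck action and the identification of its quotient in Case (2); both of these rest on the single observation that the submodule generated by any proper subcollection of $\{\xi_0,\dots,\xi_n\}$ is a direct summand — hence saturated — in $\ZZ^{n+1}$, which is precisely where the hyper characteristic condition is used.
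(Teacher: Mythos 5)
Your proposal is correct and follows essentially the same route as the paper: local charts coming from the hyper characteristic condition for the $(2n+1)$-manifold structure, splitting off a circle factor and identifying the quasitoric manifold over $\bigtriangleup^n$ with $\CP^n$ in the rank-$n$ case, and in the rank-$(n+1)$ case exhibiting $L(\bigtriangleup^n,\xi)$ as the quotient of the standard model $S^{2n+1}$ by the finite group $\ZZ_{\xi}(\bigtriangleup^n)$ — your torus $\RR^{n+1}/L$ is precisely the paper's $\ZZ^{n+1}_{\RR}/\mbox{Im}(\beta)$ and your deck group is $\ker\bar{\beta}$. The only cosmetic point is in your freeness check: from $v\in\Span_\RR(S)+L$ you should first subtract the $L$-component to get an integral vector in $\Span_\RR(S)\cap\ZZ^{n+1}=S$ and hence $v\in L$ (a one-line fix), and in any case freeness is not needed for the stated homeomorphism.
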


\begin{proof}
Let $U_{i}= \Delta^n - F_{i}$ for $i=0,1, \ldots, n$. Then $U_i$ is diffeomorphic to
$$\RR^n_{\geq 0} =\{(x_1, \ldots, x_n) \in \RR^n\mid x_i\geq 0\, \textrm{for}\, i=1,\ldots n\}.$$
Let $f^i: U_i \to \RR^n_{\geq 0}$ be a diffeomorphism.
Let $S^1_{\xi_i}$ be the circle subgroup of $T^{n+1}$ determined by the vector $\xi_i$ for $i=0, \ldots, n$.
So $T^{n+1}= S^1_i \times S^1_{\xi_0} \times \ldots \times \widehat{S^1_{\xi_i}} \times \ldots \times S^1_{\xi_n}$
for some circle subgroup $S^1_i$ determined by some primitive vector $v_i \in \ZZ^{n+1}$, where $~\widehat{}~$
represents the omission of the circle $S^1_{\xi_i}$.  Let $\{e_1, ..., e_{n+1}\}$ be the standard basis of
$\ZZ^{n+1}$ over $\ZZ$. Consider the diffeomorphism $$g^i : T^{n+1} \to T^{n+1}$$
defined by $g^i(v_i)=e_{i+1}$ and $g^i(\xi_j)= e_{j+1}$ for $j \neq i$. So the diffeomorphism $$g^i \times f^i: T^{n+1} \times U_i \to T^{n+1} \times \RR^n_{\geq 0}$$ induces an weakly-equivariant homeomorphism from
$(T^{n+1} \times U_i)/\sim$ to $$(T^{n+1} \times \RR^n_{\geq 0})/ \sim_e= S^1_{e_{i+1}} \times ((S^1_{e_1}
\times \ldots \times \widehat{S^1_{e_{i+1}}} \times \ldots \times S^1_{e_{n+1}})/\sim_e) \cong S^1_{e_{i+1}} \times \CC^{n}$$ where $\sim_e$ is the relation $\sim$ of Lemma $1.6$ in \cite{DJ}. Hence $ L(\Delta^n, \xi)$
is covered by the $(2n+1)$-dimensional open sets $(T^{n+1} \times U_i)/\sim$. Thus $ L(\Delta^n, \xi)$ is a $(2n+1)$-dimensional topological manifold. The $T^{n+1}$-action on the first factor of $T^{n+1} \times \Delta^n$ induces an effective $T^{n+1}$-action on $ L(\Delta^n, \xi)$.

Suppose the rank of $\ZZ(\Delta^n, \xi)$ is $n$. Since $\xi$ is a hyper characteristic function, the
submodule generated by $\{\xi_0, \ldots, \xi_{n-1}\}$ is a direct summand of rank $n$. Then the vector $\xi_n $
is contained in the subgroup $<\xi_0, \ldots, \xi_{n-1}>$ and  there exists $v \in  \ZZ^{n+1}$ such that
$\{\xi_0, \ldots, \xi_{n-1}, v\}$ is a basis of $\ZZ^{n+1}$. So by considering some automorphism on $\ZZ^{n+1}$,
if necessary, we may regard $\xi$ as a characteristic function of $\Delta^n$ in the sense of \cite{DJ}.
Let $S^1_{v}$ and $T_v$ be the subgroups of $T^{n+1}$ determined by $\{v\}$ and $\{\xi_0, \ldots, \xi_{n-1}\}$. So $T^{n+1} \cong S^1_v \oplus T_v$. Then $$(T^{n+1} \times \Delta^n)/\sim ~\cong S^1_{v} \times (T_{v} \times \Delta^n)/\sim ~\cong S^1_v \times \mathbb{CP}^n.$$

Now, assume that the rank of $\ZZ(\Delta^n, \xi)$ is $n+1$. 
Let $\{e_1, ..., e_{n+1}\}$ be the standard basis of $\ZZ^{n+1}$ over $\ZZ$. Define
$$\xi^s(F_i)= e_{i+1}= \xi^{s}_i \quad \mbox{for} \quad i=0, \ldots, n$$ called the standard hyper characteristic function of
$\Delta^n$. Consider the standard action of $T^{n+1}$ on $\CC^{n+1}$. The orbit
map $$\pi_s: \CC^{n+1} \to \RR^{n+1}_{\geq 0}$$ of this action is given by $(z_1, \ldots, z_{n+1})
\to (|z_1|, \ldots, |z_{n+1}|)$. Let $$H=\big\{(x_1, \ldots, x_{n+1}) \in \RR^{n+1}_{\geq} ~~\big|~~
x_1+ \cdots + x_{n+1}=1 \big\}.$$ Then $H$ is diffeomorphic as manifold with corners to $\Delta^n$.
Facets of $H$ are $$H_i=\{(x_1, \ldots, x_{n+1}) \in H ~~|~~ x_i =0 \}$$ for $i= 1, \ldots, n+1$.
The isotropy subgroup of $\pi_s^{-1}(H_i)$ is the $i$th circle subgroup of $T^{n+1}$.
So we get a hyper characteristic function on $H$ which is nothing but the standard one.
Hence it is clear that $$S^{2n+1}= \pi_s^{-1}(H) \cong (T^{n+1} \times H)/\sim_s \cong
(T^{n+1} \times \Delta^n)/ \sim_s = L(\Delta^n, \xi^s),$$ where $\sim_s$
is the equivalence relation $ \sim$ defined in (\ref{eqich}) corresponding to the standard hyper
characteristic function $\xi^s$. Consider the map $$\beta: \ZZ^{n+1} \to \ZZ^{n+1} \quad \mbox{defined by} \quad \beta(e_i=\xi^s_{i-1}) = \xi_{i-1}$$ for $i=1, ..., n+1$. Let $\ZZ_{\RR}^{n+1}= \ZZ^{n+1}
\tensor_{\ZZ} \RR$. Since the rank of $\ZZ(\Delta^n, \xi)$ is $n+1$, the map $\beta$
induces a surjective homomorphism
$$\bar{\beta}: T^{n+1} \cong \ZZ^{n+1}_{\RR}/\mbox{Im}(\beta) \to \ZZ^{n+1}_{\RR}/\ZZ^{n+1} \cong T^{n+1}$$
defined by $v+ \mbox{Im}(\beta) \to v+ \ZZ^{n+1}$.The kernel of $\bar \beta$ is $\ZZ^{n+1}/\mbox{Im}(\beta)
= \ZZ_{\xi}(\Delta^n)$, a finite subgroup of $T^{n+1}$. From the definition of $\sim_s$ and $\sim$
We get the following commutative diagram
$$
\begin{CD}
T^{n+1} \times \Delta^n @>\bar{\beta} \times \id>> T^{n+1} \times \Delta^n\\
@VVV  @VVV \\
S^{2n+1} = (T^{n+1} \times \Delta^n)/\sim_s @>f_{\beta}>> ((T^{n+1} \times \Delta)/ \sim = L(\Delta^n, \xi)
\end{CD}
$$
where $f_{\beta}$ is defined by $f_{\beta}([t,x]^{\sim_s})=[\bar{\beta}(t), x]^{\sim}$ on the equivalence classes. So $f_{\beta}$ is a continuous surjective map. The finite group $\ZZ_{\xi}(\Delta^n)$ has a natural free and smooth action on $T^{n+1}$ induced by the group operation. This induces a smooth action of $\ZZ_{\xi}(\Delta^n)$ on $S^{2n+1}$. Since $\bar {\beta}$ is a covering homomorphism with the finite covering group
$\ZZ_{\xi}(\Delta^n)$, the map $f_{\beta}$ induces a bijective continuous map between compact Hausdorff spaces $S^{2n+1}/\ZZ_{\xi}(\Delta^n)$ and $L(\Delta^n, \xi)$. Therefore $L(\Delta^n, \xi)$ is homeomorphic to the quotient space $S^{2n+1}/\ZZ_{\xi}(\Delta^n)$.
\end{proof}

In the case when rank of $\ZZ(\Delta^n, \xi)$ is $n+1$,  we call the space $L(\Delta^n, \xi)$
a {\em generalized~ lens~ space} corresponding to the hyper characteristic function $\xi$ on $\Delta^n$.
We call  $(\Delta^n, \xi)$  a {\em combinatorial model} for the generalized lens space.

\begin{remark}
 If $\{\xi_0, \ldots, \xi_n\}$ is a basis of $\ZZ^{n+1}$ over $\ZZ$ then the manifold $L(\Delta^n, \xi)$
is the sphere $S^{2n+1}$, the moment angle manifold of $\Delta^n$, where the natural action of $T^{n+1}$
may differ from the standard action on $S^{2n+1}$ by an automorphism of $T^{n+1}$.
\end{remark}

We now consider generalized lens spaces with a particular type of hyper characteristic functions.
Let $p>0,q_1, ..., q_n$ be integers such that $p$ is relatively prime to each $q_i$ for $i=1, ..., n$. Define a function $$\xi: \mathcal{F}(\Delta^n) \to \ZZ^{n+1}$$ by $\xi(F_i) = e_i$ for $i=1, ..., n$ and
$\xi(F_0)=(-q_1, -q_2, ..., -q_n, p)$. So $\xi$ is a hyper characteristic function of $\Delta^n$. The rank
of the submodule generated by $\{\xi(F_i)\mid i=0, \ldots, n\}$ is $n+1$. In this case the surjective homomorphism
$\bar{\beta}: T^{n+1} \to T^{n+1}$ induced by $\xi$ is given by,
\begin{equation}\label{cl1}
\bar{\beta} : (t_1, \ldots, t_{n+1}) \to (t_1 t_{n+1}^{-q_1}, \ldots, t_n t_{n+1}^{-q_n}, t_{n+1}^p).
\end{equation}
So $\ZZ_{\xi}(\Delta^n) \cong
\{(t_, \ldots, t_{n+1}) \in T^{n+1}\mid t_{n+1}^{-q_i}t_i=1 ~\mbox{for} ~i=1, \ldots, n~ \mbox{and}~t_{n+1}^p=1\}.$
If $\omega$ be the $p$-th root of unity, then $$\ZZ_{\xi}(\Delta^n) \cong \{(\omega^{q_1}, \ldots, \omega^{q_n}, \omega) \in T^{n+1} \mid w^p =1\} \cong \ZZ_p.$$ The $\ZZ_{\xi}(\Delta^n)$-action on $S^{2n+1}$ induced by the group
operation on $T^{n+1}$ is nothing but the following: 
$$(\omega^{q_1}, \ldots, \omega^{q_n}, \omega) \times
(z_1, z_2, \ldots, z_{n+1}) \to (\omega^{q_1}z_1, \ldots, \omega^{q_n}z_{n}, \omega z_{n+1})$$ where $(z_1, \ldots,
z_{n+1}) \in S^{2n+1}$. Hence $L(\Delta^n, \xi)$ is a  usual $(2n+1)$-dimensional lens space $ L(p; q_1, \ldots, q_n)$. In this case, since the $\ZZ_{\xi}(\Delta^n)$-action on $S^{2n+1}$ is free, the space $L(\Delta^n, \xi)$ is smooth. Moreover, the word `homeomorphic' in Proposition \ref{genlensp} can be replaced by `diffeomorphic'.

We now discuss some classification results of $(2n+1)$-dimensional lens spaces.
For an automorphism $\delta$ on $T^{n+1}$ and a combinatorial model $(\Delta^n, \xi)$ of a generalized lens space $L$,
the $\delta$-{\em translation} of $(\Delta^n, \xi)$ is the combinatorial model $(\Delta^n,\delta(\xi))$
where $$\delta(\xi):\mathcal F(\Delta^n)\to \mathbb Z^{n+1}$$ is the hyper characteristic function
such that $\delta(\xi)(F_i)$ is the vector in $\ZZ^{n+1}$ up to sign
determined by the circle subgroup $\delta(T^1_{\xi_i})$. Here $T^1_{\xi_i}$ is the  circle
subgroup of $T^{n+1}$  determined by the vector $\xi_i$.

\begin{defn}
A diffeomorphism  $g : {L_1}\to  {L_2}$ between two $T^{n+1}$-manifolds $L_1$ and $L_2$  is  $\delta$-{\em equivariant} (or {\em weekly-equivariant }) for   an automorphism $\delta$ of $T^{n+1}$  if $g$ satisfies $g(t\cdot x)= \delta(t)\cdot g(x)$ for all $(t, x) \in T^{n+1} \times L_1$.
\end{defn}
The arguments of the proof of the following lemma is similar to classification of quasitoric manifolds,
see the proof of Proposition $1.8$ of \cite{DJ}.

\begin{lemma}\label{lenscla} Let $L_1$ and $L_2$ be two lens spaces with combinatorial models $(\Delta^n, \xi^1)$ and $(\Delta^n, \xi^2)$ respectively.
Then  $L_1$ and $L_2$ are $\delta$-equivariantly diffeomorphic if and only if $(\Delta^n, \xi^2)$ is a $\delta$-translation of $(\Delta^n, \xi^1)$.
\end{lemma}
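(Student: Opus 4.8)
The plan is to follow the scheme of the proof of Lemma~$1.8$ of \cite{DJ}, adapted to the hyper characteristic setup, proving the two implications separately.

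For the ``if'' direction I would argue as follows. Suppose $(\bigtriangleup^n,\xi^2)$ is the $\delta$-translation of $(\bigtriangleup^n,\xi^1)$, so that $T^1_{\xi^2_i}=\delta(T^1_{\xi^1_i})$ for each $i=0,\ldots,n$. Define $\Phi\colon T^{n+1}\times\bigtriangleup^n\to T^{n+1}\times\bigtriangleup^n$ by $\Phi(t,x)=(\delta(t),x)$. For a face $F=F_{i_1}\cap\cdots\cap F_{i_\ell}$ one has $\delta(T_F^{\xi^1})=\delta\bigl(\langle T^1_{\xi^1_{i_1}},\ldots,T^1_{\xi^1_{i_\ell}}\rangle\bigr)=\langle T^1_{\xi^2_{i_1}},\ldots,T^1_{\xi^2_{i_\ell}}\rangle=T_F^{\xi^2}$, so $\Phi$ carries the relation~(\ref{eqich}) for $\xi^1$ to that for $\xi^2$ and descends to a $\delta$-equivariant continuous bijection $\bar\Phi\colon L_1\to L_2$, with continuous inverse induced by $\delta^{-1}$. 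I would then check smoothness in the charts $(T^{n+1}\times U_i)/\sim\,\cong S^1\times\RR^{2n}$ from the proof of Proposition~\ref{genlensp}, in which $\bar\Phi$ is, up to the chart identifications, the diffeomorphism induced by $\delta$ on the torus factor; this exhibits $\bar\Phi$ as a $\delta$-equivariant diffeomorphism.

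For the ``only if'' direction, suppose $g\colon L_1\to L_2$ is a $\delta$-equivariant diffeomorphism. Since $g$ carries $T^{n+1}$-orbits to $T^{n+1}$-orbits it descends to a homeomorphism $\bar g\colon\bigtriangleup^n\to\bigtriangleup^n$ with $\pi_2\circ g=\bar g\circ\pi_1$. Because the isotropy group along the orbit over the relative interior of a codimension-$\ell$ face has rank $\ell$ (by the hyper characteristic condition), $\bar g$ preserves the face stratification of $\bigtriangleup^n$; as in the proof of Lemma~$1.8$ of \cite{DJ} I would compose with a combinatorial self-equivalence to reduce to $\bar g=\id$. Then $g$ restricts to a diffeomorphism between the connected codimension-two submanifolds $X^1_i:=\pi_1^{-1}(F_i)$ and $X^2_i:=\pi_2^{-1}(F_i)$. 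The circle $S^1_{\xi^1_i}$ fixes $X^1_i$ pointwise, so by $\delta$-equivariance $\delta(S^1_{\xi^1_i})$ fixes $X^2_i=g(X^1_i)$ pointwise; hence $\delta(S^1_{\xi^1_i})$ is contained in the generic isotropy group along $X^2_i$ (the isotropy over the relative interior of $F_i$), which is exactly the circle $S^1_{\xi^2_i}$. Since $\delta$ is an automorphism, $\delta(S^1_{\xi^1_i})$ is itself a full circle subgroup, so $\delta(S^1_{\xi^1_i})=S^1_{\xi^2_i}$ for every $i$, i.e.\ $\xi^2_i$ equals $\delta(\xi^1)(F_i)$ up to sign. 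This is precisely the assertion that $(\bigtriangleup^n,\xi^2)$ is the $\delta$-translation of $(\bigtriangleup^n,\xi^1)$.

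The main obstacle I anticipate is the bookkeeping for the induced orbit map $\bar g$: one must verify that $\bar g$ is face-preserving and then reduce to the case $\bar g=\id$ (the step that in \cite{DJ} uses that equivariant diffeomorphisms respect orbit types), and one must ensure that ``the circle fixing $X^j_i$'' is unambiguous, which relies on the connectedness of $X^j_i$. Both are handled via the local model $(T^{n+1}\times U_i)/\sim\,\cong S^1\times\RR^{2n}$ from Proposition~\ref{genlensp}; the remaining points — well-definedness, continuity, equivariance, and smoothness of $\bar\Phi$ — are routine chart computations that I would not spell out in detail.
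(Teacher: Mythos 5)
Your proposal is correct and follows essentially the same route as the paper, which gives no independent argument for Lemma~\ref{lenscla} beyond noting that it is proved exactly as Lemma~$1.8$ of \cite{DJ}; your two-direction adaptation (the map $(t,x)\mapsto(\delta(t),x)$ descending to a $\delta$-equivariant diffeomorphism, and conversely reading off $\delta(S^1_{\xi^1_i})=S^1_{\xi^2_i}$ from the isotropy circles over the facets) is precisely that argument transplanted to hyper characteristic functions.
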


The following lemma gives a classification of lens spaces $L(p;q_1,\ldots,q_n)$ up to diffeomorphisms.  
From the integers $q_1,\ldots, q_n$ we obtain the integers $r_1, \ldots, r_n$ as follows. Let ${\bf q}:=
(-q_1, \ldots, -q_n)^t$.
Choose any $B \in SL(n, \ZZ)$, and let ${\bf a}=(a_1, \ldots, a_n)^t:= B {\bf q}$. Then consider a vector 
${\bf a}^{\prime}=(a'_1, \ldots,a'_n)^t$ such that ${\bf a}^{\prime} \equiv
{\bf a}$ (mod $p$). Now let ${\bf q}^{\prime}:=(q_1^{\prime}, \ldots, q_n^{\prime})^t= - B^{-1} {\bf a}^{\prime}$,
and choose ${\bf r}:= (r_1, \ldots, r_n)^t$ to be  ${\bf r}\equiv {\bf q}^{\prime}$ (mod $p$).

\begin{lemma}\label{lenscla2}
Let $p ~(> 0), q_1, \ldots, q_n$ be integers such that $p$ is relatively prime to each $q_i$. Let $r_i$ for $i=1,\ldots, n$ be
the integers obtained as above. Then two lens spaces $L(p; q_1, \ldots, q_n)$ and $L(p; r_1, \ldots, r_n)$ are diffeomorphic.
\end{lemma}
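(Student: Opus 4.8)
The plan is to deduce this from Lemma~\ref{lenscla}: it suffices to exhibit an automorphism $\delta$ of $T^{n+1}$ for which the combinatorial model of $L(p;r_1,\ldots,r_n)$ is the $\delta$-translation of the combinatorial model of $L(p;q_1,\ldots,q_n)$. Write $\xi^1\colon\mathcal F(\bigtriangleup^n)\to\ZZ^{n+1}$ for the hyper characteristic function with $\xi^1(F_0)=(-q_1,\ldots,-q_n,p)$ and $\xi^1(F_i)=e_i$ for $i=1,\ldots,n$, and let $\xi^2$ be the analogous function built from $r_1,\ldots,r_n$. By the discussion preceding the lemma, $(\bigtriangleup^n,\xi^1)$ and $(\bigtriangleup^n,\xi^2)$ are combinatorial models of $L(p;q_1,\ldots,q_n)$ and $L(p;r_1,\ldots,r_n)$ respectively (both have rank $n+1$ since $p\neq 0$).

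The key preliminary observation is that $r_i\equiv q_i\pmod p$ for all $i$. Indeed, writing $\mathbf a'=\mathbf a+p\mathbf k$ with $\mathbf k\in\ZZ^n$, one gets $\mathbf q'=-B^{-1}\mathbf a'=-B^{-1}B\mathbf q-pB^{-1}\mathbf k=-\mathbf q-pB^{-1}\mathbf k$. Since $B\in SL(n,\ZZ)$, its inverse also lies in $SL(n,\ZZ)$, so $B^{-1}\mathbf k\in\ZZ^n$ and hence $\mathbf q'\equiv-\mathbf q=(q_1,\ldots,q_n)^t\pmod p$; therefore $\mathbf r\equiv\mathbf q'\equiv(q_1,\ldots,q_n)^t\pmod p$. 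In particular $p$ is relatively prime to each $r_i$ (since it is to each $q_i$), so $\xi^2$ really is a hyper characteristic function and $L(p;r_1,\ldots,r_n)$ is well defined.

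Next I would write down the automorphism. Put $v_i:=(q_i-r_i)/p\in\ZZ$ and let $A\in SL(n+1,\ZZ)$ be the matrix with $Ae_i=e_i$ for $i=1,\ldots,n$ and $Ae_{n+1}=(v_1,\ldots,v_n,1)^t$; it is upper triangular with $1$'s on the diagonal, hence unimodular. Let $\delta$ be the automorphism of $T^{n+1}$ determined by $A$. A direct computation gives $A\xi^1_i=e_i=\xi^2_i$ for $i=1,\ldots,n$ and $A\xi^1_0=(-q_1+pv_1,\ldots,-q_n+pv_n,p)^t=(-r_1,\ldots,-r_n,p)^t=\xi^2_0$. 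Since each $\xi^1_i$ is primitive and $A$ is unimodular, $A\xi^1_i$ is primitive, so the vector (up to sign) determined by the circle subgroup $\delta(T^1_{\xi^1_i})$ is exactly $\xi^2_i$; that is, $(\bigtriangleup^n,\xi^2)$ is the $\delta$-translation of $(\bigtriangleup^n,\xi^1)$. Lemma~\ref{lenscla} then furnishes a $\delta$-equivariant (in particular, ordinary) diffeomorphism $L(p;q_1,\ldots,q_n)\to L(p;r_1,\ldots,r_n)$.

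The only genuinely delicate point is the congruence step: one must use that $B$ lies in $SL(n,\ZZ)$, not merely in $GL(n,\QQ)$, so that $B^{-1}$ has integer entries and $\mathbf r\equiv(q_1,\ldots,q_n)^t\pmod p$ persists; once this is secured, the construction of $\delta$ and the verification are routine. It is also worth noting that this argument reveals that the elaborate recipe producing $\mathbf r$ amounts, as far as diffeomorphism type is concerned, to nothing more than replacing each $q_i$ by a congruent integer modulo $p$, with the matrix $B$ serving only as a bookkeeping device for selecting specific representatives in applications.
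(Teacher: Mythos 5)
Your proof is correct, and it takes a genuinely different (and shorter) route than the paper's. The paper also reduces to Lemma~\ref{lenscla}, but with $\delta$ given by the block matrix $\bigl(\begin{smallmatrix} B & 0\\ 0 & 1\end{smallmatrix}\bigr)$; since $\delta(\xi)$ is then no longer in the standard form, the bulk of the paper's argument is the computation of the kernel $\ZZ_{\delta(\xi)}(\bigtriangleup^n)$ of the induced torus homomorphism, which identifies $L(\bigtriangleup^n,\delta(\xi))$ with $L(p;r_1,\ldots,r_n)$ and only then yields the diffeomorphism. You instead first extract the arithmetic content of the recipe, namely ${\bf r}\equiv(q_1,\ldots,q_n)^t \pmod p$ because $B^{-1}$ is integral, and then apply Lemma~\ref{lenscla} with a unipotent automorphism carrying $\xi^1$ exactly onto the standard-form model $\xi^2$ of $L(p;r_1,\ldots,r_n)$, so no kernel computation is needed (the identification of $L(\bigtriangleup^n,\xi^2)$ with $L(p;r_1,\ldots,r_n)$ is already available from Section~\ref{quotient space}). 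What your route buys is brevity and transparency: it shows the recipe never produces anything beyond a mod-$p$ change of parameters, and indeed once $r_i\equiv q_i\pmod p$ is in hand the two $\ZZ_p$-actions on $S^{2n+1}$ coincide, so one could even finish without Lemma~\ref{lenscla} at all. What the paper's route buys is the explicit bookkeeping of how an automorphism of the form $B\oplus 1$ transforms the characteristic data and how the resulting lens parameters are read off a kernel, which is the computation pattern reused in the cobordism arguments of Section~\ref{cobort2}. Do not be troubled by the worked example appearing to contradict your congruence (it outputs $L(8;1,3)$ from $L(8;-5,-7)$, whereas $(-5,-7)\equiv(3,1)$ mod $8$): that example contains arithmetic slips ($31\equiv 7$, not $3$, mod $8$, and the displayed $B$ has determinant $-1$), so your general observation stands. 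Minor point you may add for completeness: $\xi^1_0=(-q_1,\ldots,-q_n,p)$ is primitive since $\gcd\{q_1,p\}=1$, which justifies reading the $\delta$-translation off the vectors themselves.
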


\begin{proof}
Let $\xi$ be a hyper characteristic function of $\Delta^n$ defined by $\xi_0=(-q_1, \ldots,-q_{n}, p)$
and $\xi_i = e_i$ for $i=1, \ldots, n$. So $L(\Delta^n, \xi)= L(p; q_1, \ldots, q_{n})$.
Let $B=(b_{ij}) \in SL(n, \ZZ)$ and ${\bf a} =(a_1, \ldots, a_n)^t = B {\bf q}$.
Consider the map $\delta: \ZZ^{n+1} \to \ZZ^{n+1}$ represented by the matrix
\[
\begin{pmatrix}
B & 0\\
0 & 1
\end{pmatrix}.
\]
Then $\delta$ induces an automorphism of $T^{n+1}$, which is denoted by the same $\delta$.
Consider the hyper characteristic function $\delta(\xi)$, defined by $\delta(\xi)_i= \delta(\xi_i)$ for
$i=0, \ldots, n$. Then $\delta(\xi)$ induces a surjective homomorphism $T^{n+1} \to T^{n+1}$, defined by
$$(t_1, \ldots, t_n, t_{n+1}) \to (t_1^{b_{11}}\cdots t_{n}^{b_{1n}}t_{n+1}^{a_1}, \ldots,
t_1^{b_{n1}}\cdots t_n^{b_{nn}}t_{n+1}^{a_n}, t_{n+1}^p).$$
If ${\bf a}^{\prime}= (a_1^{\prime}, \ldots, a_n^{\prime})^t \equiv {\bf a}$  ($\mod p$), then 
The kernel of this map is given by
$$\ZZ_{\delta(\xi)}(\Delta^n) \cong \{(t_1, \ldots, t_n, t_{n+1}) \in T^{n+1} \mid
t_1^{b_{i1}} \cdots t_{n}^{b_{in}} t_{n+1}^{a_i^{\prime}}=1 ~\mbox{for} ~i=1, \ldots, n ~ \mbox{and}~t_{n+1}^p=1\}.$$
Considering the Lie algebra of $\ZZ_{\delta(\xi)}(\Delta^n)$, we need to find
$(x_1, \ldots, x_{n+1}) \in \ZZ^{n+1}$ such that
\begin{equation}
 b_{i1}x_1+ \cdots + b_{i_n}x_n + a_i^{\prime} x_{n+1}=0~ \mbox{and}~ p x_{n+1}=0 ~ \mbox{for}~
i=1, \ldots, n.
\end{equation}
Namely, $$B \begin{pmatrix}
x_1\\
\vdots \\
x_n
\end{pmatrix}
= - \begin{pmatrix}
a_1^{\prime}\\
\vdots \\
a_n^{\prime}
\end{pmatrix}x_{n+1}. $$
Then
$$\begin{pmatrix}
x_1\\
\vdots \\
x_n
\end{pmatrix}
=-B^{-1}\begin{pmatrix}
a_1^{\prime}\\
\vdots \\
a_n^{\prime}
\end{pmatrix}x_{n+1}
=\begin{pmatrix}
q_1^{\prime}\\
\vdots \\
q_n^{\prime}
\end{pmatrix}x_{n+1}.$$
So $x_i=q_i^{\prime}x_{n+1}$, i.e., $t_i=t_{n+1}^{{q_i}^{\prime}}$. Since $r_i \equiv q_i^{\prime}$
(mod $p$), we have $$\ZZ_{\delta(\xi)}(\Delta^n) \cong \{(\alpha^{r_1}, \ldots, \alpha^{r_n}, \alpha)
\in T^{n+1}: \alpha^p=1\}.$$

The automorphism $\delta$ induces a $\delta$-equivariant diffeomorphisms $$\bar{\delta}: S^{2n+1} \cong L(\Delta^n, \xi^s) \to L(\Delta^n, \delta(\xi^s)) \cong S^{2n+1}$$
and $$\bar{\delta(\xi)}: L(\Delta^n, \xi) \to L(\Delta^n, \delta(\xi)),$$ by Lemma
\ref{lenscla}. Here $\xi^s$ is the standard hyper characteristic function defined in the proof of Proposition~\ref{genlensp}.
Clearly, the following diagram  is commutative where vertical arrows are orbit maps of the actions of
$\ZZ_{\xi}(\Delta^n)$ and $\ZZ_{\delta(\xi)}(\Delta^n)$ on $S^{2n+1}$.
\begin{equation}
\begin{CD}
S^{2n+1} @>{\bar{\delta}}>> S^{2n+1}\\
@VVV  @VVV\\
 L(\Delta^n, \xi) @>\bar{\delta(\xi)} >> L(\Delta^n, \delta(\xi)).
\end{CD}
\end{equation}
Since $\ZZ_{\xi}(\Delta^n)$ acts freely on $S^{2n+1}$, $\ZZ_{\delta(\xi)(\Delta^n)}$
acts freely on $S^{2n+1}$. Therefore we have $L(\Delta^n, \delta(\xi)) \cong L(p; r_1, \ldots, r_n)$. 
Hence $L(p; q_1, \ldots, q_{n})$ is diffeomorphic to $L(p; r_1, \ldots, r_{n})$.
\end{proof}

\begin{example}
Consider the hyper characteristic functions of a triangle $\Delta^2$ given in Figure \ref{len904}.
The hyper characteristic function $\xi^2$ in $(b)$ is the $\delta$-translation of the
hyper characteristic function $\xi^1$ in $(a)$, where $\delta$ is represented by
\[ \begin{pmatrix} B & 0\\ 0 & 1 \end{pmatrix} ~\mbox{for}~ B = \begin{pmatrix} 3 & 5\\
2 & 3 \end{pmatrix}. \]
So
\[ {\bf a} = \begin{pmatrix} a_1\\ a_2 \end{pmatrix} =\begin{pmatrix} 3 & 5\\ 2 & 3 \end{pmatrix}
\begin{pmatrix} 5\\ 7 \end{pmatrix} = \begin{pmatrix} 50\\ 31 \end{pmatrix} \equiv \begin{pmatrix}
2\\ 3 \end{pmatrix} = \begin{pmatrix} a_1^{\prime}\\ a_2^{\prime} \end{pmatrix} = {\bf a}^{\prime}
 ~(\mbox{mod}~ 8) \]
Then \[ {\bf q}^{\prime} = \begin{pmatrix} q_1^{\prime}\\ q_2^{\prime} \end{pmatrix}= -B^{-1} {\bf a}^{\prime}
=-\begin{pmatrix} 3 & -5\\ -2 & 3 \end{pmatrix} \begin{pmatrix} 2\\ 3 \end{pmatrix} = \begin{pmatrix}
9\\ -5 \end{pmatrix} \equiv \begin{pmatrix} 1\\ 3 \end{pmatrix} = \begin{pmatrix}
r_1\\ r_2 \end{pmatrix}~ (\mbox{mod} ~8). \]
So $$\ZZ_{\xi^1}(\Delta^2)= \{(t^{-5}, t^{-7}, t) \in T^3 \mid t^8=1\} \quad \mbox{and} \quad
\ZZ_{\xi^2}(\Delta^2)= \{(t^{1}, t^{3}, t) \in T^3\mid t^8=1\}.$$
Hence by Lemma \ref{lenscla2}, the lens spaces $L(\Delta^2, \xi^1)=L(8; -5, -7)$
and $L(\Delta^2, \xi^2)=L(8; 1, 3)$ are diffeomorphic.
\begin{figure}[ht]
\centerline{
\scalebox{0.80}{
\input{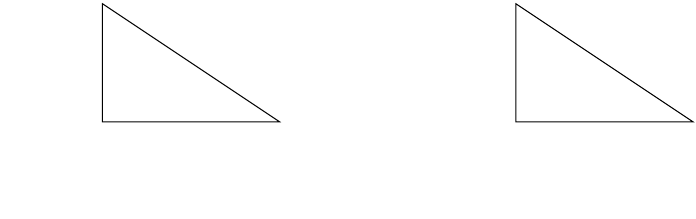_t}
 }
 }
\caption {Hyper characteristic functions $\xi^1$ and $\xi^2$ of a triangle $\Delta^2 $.}
\label{len904}
\end{figure}
\end{example}

\section{Manifolds with lens spaces boundary}\label{lensbdd}
In this section we construct oriented $T^n$-manifold with boundary where the boundary is a disjoint union of lens spaces. Similar construction can be found in Section 4 of \cite{Sar2}. Let $Q$ be an $n$-dimensional simple convex  polytope in $\RR^n$ with facets $F_1, \ldots, F_{m}$ and vertices $V_1, \ldots, V_{k}$. Let $\mathcal{F}(Q)$ denote the set $\{F_1, \ldots, F_{m}\}$ of the facets of $Q$.

\begin{defn}\label{diso}
A function $\eta : \mathcal{F}(Q) \to \ZZ^{n}$ is called a {\em rational characteristic function} on $Q$
if the set of vectors $\{\eta(F_{i_1}), \ldots, \eta(F_{i_{\ell}})\}$ form a part of a basis of $\ZZ^{n}$
whenever the intersection of the facets $\{F_{i_1}, ..., F_{i_\ell}\}$ is an $(n-\ell)$-dimensional face of $Q$, where $n-\ell >0$. The vectors $\eta_i := \eta(F_i)$ for $i=1, \ldots, m$ are called {\em rational characteristic vectors}.
\end{defn}

Note that the definition of a rational characteristic function is same as that of  a characteristic function of a quasitoric manifold in \cite{DJ} except  when $\ell=0$, i.e., when $F_{i_1} \cap \ldots \cap F_{i_n}$ is a vertex of the polytope $Q$.

\begin{example}
Clearly the function given in the Figure \ref{len902} $(a)$ is a rational characteristic
function of a rectangle. We can check that the function given in Figure \ref{len902} $(b)$
is a rational characteristic function of the tetrahedron if and only if any two integers
of $\{q_1, q_2, p\}$ are relatively prime.
\begin{figure}[ht]
\centerline{
\scalebox{0.80}{
\input{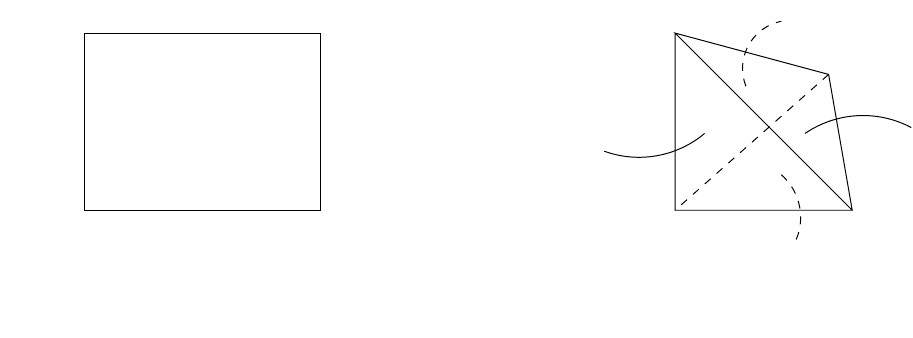_t}
 }
 }
\caption {Some rational characteristic functions of rectangle and tetrahedron.}
\label{len902}
\end{figure}
\end{example}

Let  $\eta : \mathcal{F}(Q) \to \ZZ^{n}$ be a rational characteristic function of $Q$. Let $ F $ be a face of $ Q $ of codimension $ \ell $ with $0 < \ell < n$. Since $Q$ is a simple polytope, $ F $ is the intersection of a unique collection of $ \ell $ many facets $F_{i_1}, F_{i_2}, \ldots, F_{i_\ell}$ of $ Q $. Let $ T_F $ be the torus subgroup of $ T^{n}$ corresponding to the submodule generated by $ \eta_{i_1}, \eta_{i_2}, \ldots, \eta_{i_\ell} $ in $ \ZZ^{n} $. We assume $T_Q=1$ and $T_{V_i}=T^n$ for each vertex $V_i$ of $Q$. We define an equivalence relation $\sim_b$ on the product $ T^{n} \times Q $ as follows:
\begin{equation}\label{equieta}
 (t, x) \sim_b (u, y) \quad \mbox{if and only if} \quad x = y ~ \mbox{and} ~ tu^{-1} \in T_{F}
\end{equation}
where $ F \subset Q$ is the unique face containing $ x $ in its relative interior. We denote the quotient
space $ (T^{n} \times Q)/ \sim_b $ by $ X(Q, \eta) $ and the equivalence class of $(t, x)$ by $[t, x]^{\sim_b}$.
The space $ X(Q, \eta) $ is a manifold if and only if the set of vectors $\{\eta(F_{i_1}), \ldots, \eta(F_{i_{n}})\}$
form a basis of $\ZZ^{n}$ whenever $F_{i_1} \cap \cdots \cap F_{i_n}$ is a vertex of $Q$. In this case, the space
$X(Q, \eta)$ is called a {\em quasitoric manifold} which was introduced by M. Davis and T. Januszkiewicz in \cite{DJ}.
The space $X(Q, \eta)$ is a quasitoric orbifold if the rank of the submodule generated by $\{\eta(F_{i_1}), \ldots,
\eta(F_{i_{n}})\}$ is $n$ whenever $F_{i_1} \cap \cdots \cap F_{i_n}$ is a vertex of $Q$, see the Section $2$ in
\cite{PS}. Let $$\pi: X(Q, \eta) \to Q$$ be the projection map defined by $\pi([t,x]^{\sim_b})=x$.

We can construct $T^n$-manifold with boundary from the orbifold $X(Q, \eta)$ as follows. Cut off a neighborhood of each
vertex $ V_i, i= 1, 2, \ldots, k $ of the simple polytope $Q$ by an affine hyperplane $ H_i, i= 1, 2, \ldots, k$
in $\RR^n$ such that $H_i \cap H_j \cap Q$ are empty sets for $i \neq j$. We call this operation the {\em vertex
cut} of $Q$. Then the remaining subset of the convex polytope $Q$ is an $n$-dimensional simple convex polytope,
denoted by $ Q_V $. Then  the facet $$\Delta^{n-1}_{i}:= Q \cap H_i (= Q_V \cap H_i)$$  of $Q_V$ is  
an $(n-1)$-dimensional simplex for each $i= 1, 2, \ldots, k $.
We restrict the equivalence relation
$\sim_b$ in  (\ref{equieta}) to $T^{n} \times Q_V$, and consider the quotient space
\begin{equation} \label{bounded manifold}
 W(Q_V, \eta) = (T^{n} \times
Q_V )/\sim_b ~ \subset X(Q, \eta).
\end{equation}
The natural action of $ T^{n} $ on $ W(Q_V, \eta) $
is induced by the group operation in $ T^{n}$.

\begin{lemma}\label{lpbdd}
Let $\eta$ be a rational characteristic function on a simple polytope $Q$,  and let $Q_V$ be the vertex cut of $Q$. Then  $ W(Q_V, \eta) $ of \eqref{bounded manifold} is an oriented $2n$-dimensional $T^n$-manifold with boundary,
 whose boundary is a disjoint union of $(2n-1)$-dimensional generalized lens spaces.
\end{lemma}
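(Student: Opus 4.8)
The plan is to analyze the quotient space $W(Q_V,\eta)$ chart by chart over the faces of $Q_V$, exactly as in the proof of Proposition~\ref{genlensp}, and then identify the boundary pieces explicitly. First I would observe that $Q_V$, being a simple polytope, gives rise to two kinds of facets: the "old" facets coming from truncating the original facets $F_1,\dots,F_m$ of $Q$, and the $k$ "new" simplicial facets $\bigtriangleup^{n-1}_{V_i} = Q\cap H_i$ obtained from the vertex cuts. Away from the new facets, $\eta$ restricted to $Q_V$ behaves exactly like a characteristic function in the sense of \cite{DJ}: because the nonsingularity condition for $\eta$ was only required for faces of $Q$ of positive codimension, and $Q_V$ has been arranged so that it has \emph{no} vertices of $Q$ among its vertices, every vertex of $Q_V$ lies on at least one new facet $\bigtriangleup^{n-1}_{V_i}$. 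Hence over any vertex $v$ of $Q_V$ the collection of $n$ facets through $v$ consists of some new facet $\bigtriangleup^{n-1}_{V_i}$ together with $n-1$ old facets whose $\eta$-images form part of a basis; this is genuinely a codimension-$\ell$ face relation of $Q$ for $\ell = n-1$, so $\eta$ on these is unimodular, and local charts of the form $T^n\times\RR^n_{\geq0}/\!\sim$ are smooth manifold charts. This shows $W(Q_V,\eta)$ is a smooth $2n$-manifold with boundary.

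Next I would identify the boundary. The boundary of $W(Q_V,\eta)$ is $\pi^{-1}$ of the union of the new facets, and since the $H_i$ are disjoint on $Q$, this is a disjoint union over $i = 1,\dots,k$ of $\pi^{-1}(\bigtriangleup^{n-1}_{V_i})$. For a fixed $i$, the piece $\pi^{-1}(\bigtriangleup^{n-1}_{V_i}) = (T^n\times \bigtriangleup^{n-1}_{V_i})/\!\sim_b$ is, by construction, precisely a space of the form $L(\bigtriangleup^{n-1},\xi^{(i)})$ where $\xi^{(i)}\colon \mathcal F(\bigtriangleup^{n-1})\to\ZZ^n$ sends each facet $\bigtriangleup^{n-1}_{V_i}\cap F_j$ (for the $n$ old facets $F_j$ meeting $V_i$) to $\eta(F_j)$. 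The ambient torus has rank $n$ and the simplex is $(n-1)$-dimensional, so the target rank is one more than the dimension: this is exactly the setup of a hyper characteristic function of $\bigtriangleup^{n-1}$. One must check that $\xi^{(i)}$ satisfies the hyper characteristic condition, i.e.\ that every subset of $\{\eta(F_{j_1}),\dots,\eta(F_{j_\ell})\}$ indexed by old facets whose intersection with $\bigtriangleup^{n-1}_{V_i}$ is nonempty spans a rank-$\ell$ direct summand — but these intersections correspond to faces of $Q$ through $V_i$ of codimension $\ell$ in $Q$, for which $\eta$ is unimodular by Definition~\ref{diso}. Therefore each boundary component is a $(2n-1)$-dimensional generalized lens space by Proposition~\ref{genlensp}.

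Finally, orientability: I would fix an orientation on the torus $T^n$ and on the polytope $Q$ (hence on $Q_V$), and note that the local charts $T^n\times\RR^n_{\geq0}/\!\sim$ are glued by maps that are, on the torus factor, given by elements of $GL(n,\ZZ)$ composed with translations, and on the polytope factor by orientation-preserving diffeomorphisms of pieces of $Q_V$; choosing the unimodular gluing matrices consistently (or passing to a standard orientation as in \cite{DJ}) yields a global orientation on $W(Q_V,\eta)$. The main obstacle I anticipate is the bookkeeping at the vertices of $Q_V$: one has to be careful that every such vertex lies on exactly one new facet (this uses the disjointness $H_i\cap H_j\cap Q=\emptyset$), so that the manifold condition near $\partial W$ reduces cleanly to the already-established nonsingularity of $\eta$ on positive-codimension faces of $Q$, and so that no spurious singular points are introduced by the truncation. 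The rest is a routine transcription of the Davis–Januszkiewicz chart argument already used in Proposition~\ref{genlensp}.
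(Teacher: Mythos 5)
Your proposal is correct and follows essentially the same route as the paper's proof: a chart-by-chart analysis of $(T^n\times Q_V)/\sim_b$ using the Davis--Januszkiewicz local models away from the cut facets, identification of each boundary piece $(T^n\times\bigtriangleup^{n-1}_{V_i})/\sim_b$ as $L(\bigtriangleup^{n-1}_{V_i},\xi^i)$ via the restriction of $\eta$ (a hyper characteristic function, by the nonsingularity of $\eta$ on the positive-codimension faces of $Q$ through $V_i$), collar/disjointness considerations for the new facets, and an orientation induced from those of $T^n$ and $Q$. The only cosmetic difference is that you organize the local analysis around the vertices of $Q_V$ while the paper runs through interior points, faces not meeting the cut simplices, and collar neighborhoods of the cut simplices, but the underlying argument is the same.
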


\begin{proof}
We consider $C_j=\{F : F ~ \mbox{is a face of} ~ Q_V ~\mbox{and}~ F \cap \Delta^{n-1}_j = \emptyset\}$
and $$U_j = Q_V - \cup_{F \in C_j} F$$ for $j=1, \ldots, k$. Since $Q_V$ is a simple polytope,
$U_j$ is homeomorphic as manifold with corners to $\Delta^{n-1}_j \times [0, 1)$ and $Q_V = \cup_{j=1}^k U_{j}$. Let $$ f_j \colon U_j \to \Delta^{n-1}_j \times [0,1)$$ be a homeomorphism. Notice that the facets of $\Delta^{n-1}_j$ are $\{\Delta^{n-1}_j \cap F_{j_1}, \ldots, \Delta^{n-1}_j \cap F_{j_n}\}$ for some facets $F_{j_1}, \ldots, F_{j_n} \in \{F_1, \ldots, F_m\}$ such that $V_j = F_{j_1} \cap \cdots \cap F_{j_n}$.
The restriction of the rational characteristic function $\eta$ on the facets of $\Delta^{n-1}_j$ is given by
$$\xi^j(\Delta^{n-1}_j \cap F_{j_i})= \eta_{j_i} \quad \mbox{for} \quad i = 1, \ldots, n.$$
By definition of the rational characteristic function $\eta$, we can see that $\xi^j$ is a hyper characteristic function on $\Delta^{n-1}_j$. So by Section \ref{quotient space}, the space $L(\Delta^{n-1}_j, \xi^j) =(T^{n} \times \Delta^{n-1}_j)/\sim_b$ is a $(2n-1)$-dimensional generalized lens spaces for $j=1, \ldots, k$. From the equivalence relation $\sim_b$ in (\ref{equieta}), we have the following commutative diagram where lower horizontal maps are homeomorphisms.
$$
\begin{CD}
T^n \times U_j @>Id \times f_i>> T^n \times \Delta^{n-1}_j \times [0, 1) @.\\
@VVV  @VVV @.\\
(T^{n} \times U_j)/\sim_b @>h_i>> ((T^{n} \times \Delta^{n-1}_j)/ \sim_b)\times [0,1) @>\cong>> L(\Delta^{n-1}_j, \xi^j) \times [0, 1).
\end{CD}
$$
So $$W(Q_V, \eta)~ = ~ \bigcup_{j=1}^k (T^{n} \times U_j)/\sim_b ~\cong ~\bigcup_{j=1}^k (L(\Delta^{n-1}_j, \xi^j) \times [0, 1)).$$ Hence $W(Q_V, \eta)$ is an orbifold with boundary where the boundary is the disjoint union of lens spaces $\{L(\Delta^{n-1}_j, \xi^j) \colon j = 1, \ldots, k\}$. Clearly orientations of $T^{n}$ and $Q$ induce an orientation of $W(Q, \lambda)$.
\end{proof}

\begin{remark}
If a vertex $V_j=F_{j_1} \cap \cdots \cap F_{j_n}$ of $Q$ such that the set of vectors $\{\eta(F_{j_1}), \ldots,$
$\eta(F_{j_{n}})\}$ form a basis of $\ZZ^{n}$, then $(T^n \times \Delta^{n-1}_{j})/\sim$ is
$\delta$-equivariantly homeomorphic to $S^{2n-1}$ with the standard $T^n$ action  for some automorphism $\delta$ of $T^n$.
\end{remark}

\section{Torus cobordism of $L(p; q_1, \ldots, q_n)$}\label{cobort2}
In this section we discuss the $T^{n+1}$-equivariant cobordism of lens spaces $L(p; q_1, \ldots, q_n)$.
First, we recall the definition of $T^{k}$-equivariant cobordism for $T^k$-manifolds where $k$ is a positive
integer.
\begin{defn}
Two same dimensional oriented closed smooth manifolds $M_1$ and $M_2$ with effective $T^k$-actions are said to be $T^k$-{\em equivariantly cobordant} if there exists an oriented $T^k$-manifold $W$ with boundary $\partial W $ such that $\partial W $ is equivariantly homeeomorphic to $ M_1 \sqcup (-M_2)$ under an
orientation preserving homeomorphism. Here $-M_2$ represents the reverse orientation of $M_2$. When a
$T^k$-manifold $M$ is the boundary of an oriented $T^k$-manifold with boundary, $M$ is called
$T^k$-{\em equivariantly oriented boundary}.
\end{defn}
From the definition of the manifold $L(\Delta^n, \xi)$ it is clear that $T^{n+1}$-action depends
on the characteristic function $\xi$. We denote the equivariant cobordism class of $L(p; q_1, \ldots, q_n)$
by $[L(p; q_1, \ldots, q_n)]_{\delta}$ where $\delta$ represents the action.
We discuss the equivariant cobordism of  $3$-dimensional, $5$-dimensional, and higher dimensional lens spaces
separately in the following subsections.
Given $a_0, \ldots, a_k \in \ZZ$, we denote greatest common divisor of them by
$\gcd\{a_0, \ldots, a_k\}$.

\subsection{Cobordism of $L(p, q)$, that is when $n=1$}\label{lpq}

\mbox{}\\

In this case $\Delta^1$ is an $1$-dimensional simplex, that is the closed interval $[0, 1]$.
Define $$\xi: \{\{0\}, \{1\}\} \to \ZZ^2$$ by $\xi(\{0\})= (1, 0)$ and $\xi(\{1\})= (-q, p)$ where
$\gcd\{q, p\} =1$ with $0\leq  |q| < p$. From Section \ref{quotient space} (also from Section $2$ of \cite{OR}, we get that the space $L(\Delta^1, \xi)$ is the lens space $L(p, q)$ with the natural $T^2$-action
coming from the group operation on the first factor of $T^2 \times \Delta^1$.

\begin{lemma}\label{lem1}
Let $(a,b), (c,d) \in \ZZ^2$ such that $|\det\{(a,b), (c, d)\}| = r > 1$ and $\gcd\{a, b\} = 1
= \gcd\{c,d\}$ . Then there exists $(e,f) \in \ZZ^2$ such that $|\det\{(a,b), (e,f)\}| =1$,
 $\gcd\{e,f\}$ $=1$ and $|\det\{(c, d), (e,f)\}| < r$.
\end{lemma}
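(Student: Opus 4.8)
The plan is to produce $(e,f)$ by a Euclidean-type reduction argument on the second vector. Since $\gcd\{c,d\}=1$, the lattice $\ZZ^2$ is spanned by $(c,d)$ together with any vector $(u,v)$ with $|\det\{(c,d),(u,v)\}|=1$; fix one such $(u,v)$. Then every primitive vector can be written as $k(c,d)+(u,v)$ for a suitable integer $k$ up to sign. I would look for $(e,f)$ of the form $(e,f)=(u,v)+k(c,d)$ for $k\in\ZZ$. Automatically $|\det\{(c,d),(e,f)\}|=|\det\{(c,d),(u,v)\}|=1<r$, and $\gcd\{e,f\}=1$ because $(e,f)$ differs from the basis vector $(u,v)$ by a multiple of the complementary basis vector $(c,d)$, so $(c,d),(e,f)$ is a basis of $\ZZ^2$. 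The only remaining condition to arrange is $|\det\{(a,b),(e,f)\}|=1$.

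Next I would compute $\det\{(a,b),(e,f)\} = \det\{(a,b),(u,v)\} + k\,\det\{(a,b),(c,d)\}$. Write $D_0:=\det\{(a,b),(u,v)\}$ and recall $\det\{(a,b),(c,d)\}=\pm r$. So as $k$ ranges over $\ZZ$, the value $\det\{(a,b),(e,f)\}$ ranges over the arithmetic progression $D_0 + k(\pm r)$, i.e. over all integers congruent to $D_0$ modulo $r$. Hence I can achieve $|\det\{(a,b),(e,f)\}|=1$ for some $k$ precisely when $D_0\equiv \pm 1 \pmod r$. The heart of the matter is therefore to show $D_0\equiv\pm1\pmod r$, equivalently $\gcd(D_0,r)=1$ is not quite enough — I actually need $D_0\equiv\pm1$, so I should instead choose the auxiliary vector $(u,v)$ more cleverly rather than arbitrarily.

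The cleaner route, which I expect to be the one that works, is to build $(e,f)$ directly adapted to $(a,b)$: since $\gcd\{a,b\}=1$, pick $(u,v)$ with $\det\{(a,b),(u,v)\}=1$, so $\{(a,b),(u,v)\}$ is a basis of $\ZZ^2$. Write $(c,d)=\alpha(a,b)+\beta(u,v)$ in this basis; then $\det\{(a,b),(c,d)\}=\beta$, so $|\beta|=r$, and $\gcd(\alpha,\beta)=1$ since $(c,d)$ is primitive. Now set $(e,f):=(u,v)+k(a,b)$ for an integer $k$ to be chosen. Then $\det\{(a,b),(e,f)\}=\det\{(a,b),(u,v)\}=1$, and $\gcd\{e,f\}=1$ since $\{(a,b),(e,f)\}$ is again a basis. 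Finally $\det\{(c,d),(e,f)\}=\det\{\alpha(a,b)+\beta(u,v),\,(u,v)+k(a,b)\} = \alpha - k\beta$ (up to sign), which as $k$ varies runs over all integers $\equiv\alpha\pmod{\beta}=\alpha\pmod{\pm r}$. Choosing $k$ so that $\alpha-k\beta$ is the representative of $\alpha\bmod r$ lying in $\{0,1,\dots,r-1\}$ (or its negative), we get $|\det\{(c,d),(e,f)\}|\le r-1<r$. (It cannot be $0$: that would force $\beta\mid\alpha$, hence $\gcd(\alpha,\beta)=|\beta|=r>1$, contradicting primitivity of $(c,d)$.)

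The main obstacle is precisely the bookkeeping at the end: one must verify the strict inequality $|\det\{(c,d),(e,f)\}|<r$ and not merely $\le r$, which is why the case $\det\{(c,d),(e,f)\}=0$ has to be excluded using $r>1$ together with primitivity of $(c,d)$; and one must double-check that the construction never secretly forces $\gcd\{e,f\}>1$, which is handled automatically by the basis property. Everything else is a routine $2\times 2$ determinant computation together with the standard fact that a vector completing a primitive vector to a basis is determined up to adding multiples of that primitive vector.
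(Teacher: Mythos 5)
Your argument is correct, and it reaches the lemma by a genuinely different, purely algebraic route. The paper first reduces to the special case $(a,b)=(1,0)$ via an $SL(2,\ZZ)$ transformation (using primitivity of $(a,b)$ and the fact that such a transformation preserves primitivity and absolute values of determinants), and then argues geometrically: the parallelogram spanned by $(1,0)$ and $(c,d)$ has area $r=|d|$, the line $y=1$ meets it in a segment of length $1$ and hence contains a lattice point $(u,1)$, and the parallelogram spanned by $(c,d)$ and $(u,1)$ has area $|c-du|<r$. You perform the normalization implicitly, by completing $(a,b)$ to a basis $\{(a,b),(u,v)\}$ of $\ZZ^2$, and you replace the lattice-point/area step by the division algorithm: writing $(c,d)=\alpha(a,b)+\beta(u,v)$ with $|\beta|=r$ and $\gcd\{\alpha,\beta\}=1$ (by primitivity of $(c,d)$), you set $(e,f)=(u,v)+k(a,b)$ and choose $k$ so that $\alpha-k\beta$ is the least nonnegative residue of $\alpha$ modulo $r$, which lies in $\{1,\dots,r-1\}$ since $r>1$. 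This buys you a single uniform argument with no case analysis on the signs and relative sizes of $c$ and $d$ (which the paper sketches only for one case and leaves the rest to the reader), and it even yields the slightly stronger conclusion $0<|\det\{(c,d),(e,f)\}|<r$; the paper's version, in exchange, offers a concrete geometric picture of why the determinant drops. One small caveat: the remark in your first, abandoned paragraph that every primitive vector has the form $(u,v)+k(c,d)$ up to sign is false (e.g.\ $(c,d)$ itself is a counterexample), but your final argument nowhere relies on it.
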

\begin{proof}
First we prove when $(a, b)= (1, 0)$. Then $ |\det[(1,0), (c,d)]|$ $= r= |d|$.
Since $c, d $ are relatively prime and $r >1$, $d \neq 0,\pm 1$ and either
$c>d$ or $d >c$. Let $c>0, d >0 $ and $c>d$. Then $r$ is the area of the parallelogram $P$ in $\RR^2$ with
vertices $V_1=(0,0)$, $V_2=(1,0)$, $V_3=(c+1, d)$ and $V_4=(c, d)$. Clearly the length of $\{y=1\} \cap P$
is $1$. So $\{y=1\} \cap P \cap \ZZ^2$ is nonempty. It may contains only one point $ (u, 1)$, since
$c, d$ are relatively primes and the intersection $\{y=1\} \cap \{cy=dx\} \cap \ZZ^2$ is empty. From the
elementary geometry we get that the area $$|\det\{(c,d), (u, 1)\}| = |c-du|$$ of the parallelogram $P_1$
with vertices $(0,0)$, $(u, 1)$, $(c+u, d+1)$ and $(c,d)$ is less than $r$. Also $|\det\{(1,0), (u,1)\}|=1$
and $u, 1$ are relatively primes. For other possible values of $c$ and $ d$ we can prove similarly.

We now prove the case when $a, b$ are arbitrary relatively prime integers.
Since $a, b$ are relatively prime, there exists $(x, y) \in \ZZ^2$ such that $\det\{(a,b), (x,y)\}=1$. So there
exists $A \in SL(2, \ZZ)$ with $A (a,b) =(1, 0)$ and $A(x, y)=(0,1)$. Note that if $\gcd\{c,d\}=1$ then
$\gcd\{(a_{11}c+a_{12}d), (a_{21}c+a_{22}d)\}=1$ for any $(a_{ij}) \in SL(2, \ZZ)$. Using this and the
previous arguments we can prove the lemma for general case.
\end{proof}

\begin{lemma}\label{lem2}
Let $p, q$ be two relatively prime integers and $0 < q < p$. Then there exists a sequence of pairs $(q_1, p_1),
\ldots, (q_k, p_k)$ where $|\det\{(q_i,p_i), (q_{i+1}, p_{i+1})\}|=1$ for all $i=1, \ldots, k-1$ and
$(q_1, p_1)=(1,0), (q_k, p_k)=(q, p)$.
\end{lemma}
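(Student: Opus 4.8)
The plan is to build the sequence by a greedy descent on the quantity $r = |\det\{(q,p),(q_i,p_i)\}|$, using Lemma~\ref{lem1} as the engine of the induction. I proceed by induction on $p$ (equivalently, on $r = |\det\{(1,0),(q,p)\}| = p$). If $p = 1$ then $q$ is forced to lie in $\{0\}$ by the hypothesis $0<q<p$ is vacuous, or more simply the only relevant case $(q,p)=(0,1)$ gives the trivial sequence $(1,0),(0,1)$ since $|\det\{(1,0),(0,1)\}|=1$; so the base case holds. For the inductive step, suppose $p>1$. Apply Lemma~\ref{lem1} with $(a,b)=(1,0)$ and $(c,d)=(q,p)$: since $\gcd\{q,p\}=1$ and $r=p>1$, we obtain $(e,f)\in\ZZ^2$ with $\gcd\{e,f\}=1$, $|\det\{(1,0),(e,f)\}|=1$ (so $|f|=1$, and after replacing $(e,f)$ by $-(e,f)$ if necessary we may take $f=1$, i.e. the vector $(e,1)$), and $|\det\{(q,p),(e,1)\}| < r = p$.

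Next I normalize $(e,1)$ to match the format of the statement. Replacing $(e,1)$ by $(e - np,\, 1)$ changes nothing about the determinant with $(1,0)$ (it stays $1$) and changes $\det\{(q,p),(e,1)\}$ only by a multiple of $p$ times... wait, more carefully: $\det\{(q,p),(e-np,1)\} = \det\{(q,p),(e,1)\} - np\cdot? $ — actually $\det\{(q,p),(e,1)\} = q - pe$, so adding $np$ to $e$ changes it by $-np^2$; that is not what I want. The cleaner route: I do not insist the intermediate pair have second coordinate in $(0,p)$; I only need the two determinant conditions, and Lemma~\ref{lem1} already delivers a pair $(e,1)$ — call it $(q',p')$ with $p'=1$ — satisfying $\gcd\{q',p'\}=1$, $|\det\{(1,0),(q',p')\}|=1$, and $0 \le |\det\{(q',p')$... hmm, but the recursion wants to feed a pair into the inductive hypothesis, and the inductive hypothesis is phrased for $(q,p)$ with $0<q<p$. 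So instead I should recurse on the \emph{pair} $(q',p') := (e,1)$ paired against $(q,p)$ via a change of basis: choose $A\in SL(2,\ZZ)$ sending $(e,1)\mapsto(1,0)$, and let $(q'',p'') := A\cdot(q,p)$. Then $\gcd\{q'',p''\}=1$, $|\det\{(1,0),(q'',p'')\}| = |\det\{(e,1),(q,p)\}| < p$, and $\gcd$ with $(1,0)$ forces $p''$ to be this smaller determinant value (up to sign; normalize $p''>0$, and reduce $q''$ mod $p''$ to land in $[0,p'')$, which does not change any determinant with $(1,0)$).

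Now apply the inductive hypothesis to $(q'',p'')$: there is a sequence $(1,0)=(q''_1,p''_1),\dots,(q''_k,p''_k)=(q'',p'')$ with consecutive determinants $1$ in absolute value. Pull this sequence back through $A^{-1}$: the sequence $A^{-1}(q''_j,p''_j)$ runs from $A^{-1}(1,0) = (e,1)$ to $A^{-1}(q'',p'') = (q,p)$, and since $A^{-1}\in SL(2,\ZZ)$ all consecutive determinants are preserved. Finally, prepend $(1,0)$ in front of $(e,1)$ (legal because $|\det\{(1,0),(e,1)\}|=1$): this yields a valid sequence from $(1,0)$ to $(q,p)$. That completes the induction.

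The main obstacle is the bookkeeping around normalization: Lemma~\ref{lem1} is a single-step decrease statement producing a pair whose second coordinate is $1$, not a pair in the range $0<q'<p'$, so one cannot naively recurse ``on $(e,f)$'' — one must instead change basis so that the newly found vector becomes $(1,0)$ and recurse on the \emph{image} of $(q,p)$, whose last coordinate is the strictly smaller determinant. Once that reduction is set up correctly, the fact that $SL(2,\ZZ)$ preserves all the determinant and $\gcd$ conditions makes the rest automatic. One should also double-check the trivial small cases ($r=1$) separately, where no step is needed beyond the length-two sequence $(1,0),(q,p)$.
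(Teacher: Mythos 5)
Your argument is correct in substance, but it is organized differently from the paper's. The paper's proof of Lemma~\ref{lem2} is the direct greedy iteration: keep the target $(q,p)$ fixed as $(c,d)$ and apply Lemma~\ref{lem1} repeatedly with the \emph{moving} anchor $(a,b)=(q_i,p_i)$, so that each new pair has $|\det|=1$ with its predecessor while $|\det\{(q,p),(q_i,p_i)\}|$ strictly decreases; since these are positive integers the process stops, and one appends $(q,p)$ when the value reaches $1$. No change of basis and no renormalization are needed, because Lemma~\ref{lem1} is stated for an arbitrary primitive $(a,b)$ precisely so it can be re-applied in this way. You instead use only the special case $(a,b)=(1,0)$ of Lemma~\ref{lem1} and recover the general step by conjugating with $A\in SL(2,\ZZ)$ and recursing on the image of $(q,p)$ --- in effect you move the $SL(2,\ZZ)$ reduction that the paper put inside the proof of Lemma~\ref{lem1} into the proof of Lemma~\ref{lem2}. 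That is a legitimate alternative, and it makes the induction variable (the second coordinate of the normalized target) explicit, but it is what forces all the normalization bookkeeping you wrestle with. One slip to repair: after you normalize the sign of $p''$ and reduce $q''$ modulo $p''$, you must pull the recursive chain back through the \emph{composite} transformation (the shear fixing $(1,0)$, and a sign, composed with $A$), not through $A^{-1}$ alone; otherwise the pulled-back chain ends at the preimage of the normalized vector rather than at $(q,p)$. Since the shear fixes $(1,0)$, the composite still sends $(e,1)$ (up to sign) to $(1,0)$, and any residual sign is harmless because negating a single vector in the chain changes no $|\det|$; with that one-line correction your induction closes, and the base case $(0,1)$ is handled exactly as you say.
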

\begin{proof}
 The proof is the successive application of Lemma \ref{lem1}.
\end{proof}

\begin{theorem}\label{3lens}
Any lens space $L(p; q)$ is $T^2$-equivariantly oriented boundary.
\end{theorem}

\begin{proof}
Without loss of generality, we may assume that $0<q<p$. So by Lemma \ref{lem2}, there exists $(q_1, p_1), \ldots, (q_k, p_k) \in \ZZ^2$ where
$|\det\{(q_i,p_i), (q_{i+1}, p_{i+1})\}|=1$ for all $i=1, \ldots, k-1$ and $(q_1, p_1)=(1,0),
(q_k, p_k)=(-q, p)$. Consider the $(k+1)$-gon $Q^2$ with vertices $V_1, \ldots, V_{k+1}$. So the
edges are $V_1V_2, \dots, V_kV_{k+1}, V_{k+1}V_1$. 
Define a function
$$\eta: \{V_iV_{i+1} \mid i = 1, \ldots, k\} \to \ZZ^2$$
 by $\eta(V_iV_{i+1})= (q_i, p_i)$ for $i=1, \ldots, k$, see Figure \ref{len905} $(a)$.
Let $T_F$ be the circle subgroup of $T^2$ determined by $\eta(V_iV_{i+1})$ if $F=V_iV_{i+1}$ and
$T_F=T^2$ if $F =\{V_i\}$ for $i =2, \ldots, k$. Fix $T_F = 1$ if $F= V_1V_{k+1}$ and $F= Q^2$. We define an equivalence relation $\sim_b$ on the
product $T^2 \times Q^2$ by
\begin{equation}
 (t, x) \sim_b (s, y) ~ \mbox{if} ~ x = y ~\mbox{and} ~ts^{-1}\in T_F
\end{equation}
where $F$ is the unique face containing $x$ in its relative interior. We denote the quotient
space $(T^2 \times Q^2)/\sim_b$ by $W(Q^2, \eta)$. Following the construction 1.5 of \cite{DJ} and Section \ref{lensbdd}, one can show that the space $W(Q^2, \eta)$ is an oriented $T^2$-manifold with boundary and the boundary is $(T^2 \times V_1V_{k+1})/\sim_b$ which is equivariantly homeomorphic to $L(p; q)$.
\end{proof}

\begin{figure}
\centerline{
\scalebox{0.80}{
\input{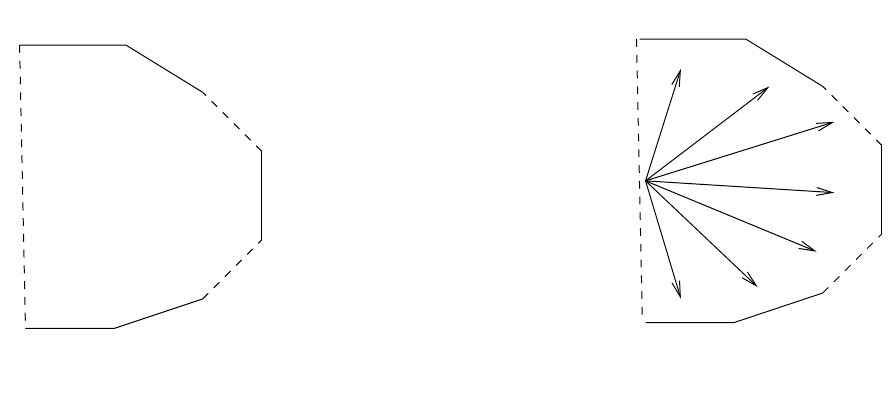_t}
 }
 }
\caption {The map $\eta$ on $Q^2$ and a retraction of $Q^2$ respectively.}
\label{len905}
\end{figure}

\begin{corollary}
The lens space $L(p, q)$ bounds a simply connected 4-manifold. 
\end{corollary}
\begin{proof}
We adhere to the notations of the proof of Theorem \ref{3lens}.
Let $E= V_1V_2 \cup \cdots \cup V_{k}V_{k+1}$ and $\pi : W(Q^2, \eta) \to Q^2$ be the orbit map. Then $\pi^{-1}(E)$ is a deformation retract of $W(Q^2, \eta)$, a corresponding retraction on $Q^2$ is shown in Figure \ref{len905} $(b)$. Note that $\pi^{-1}(V_iV_{i+1}) \cong S^2$ for $i=1, k$ and $$\pi^{-1}(V_iV_{i+1}) \cap \pi^{-1}(V_{i+1}V_{i+2}) = \pi^{-1}(V_{i+1}) \cong \ast$$ for $i=1, \ldots, k-1$. Also for $i < j$, $\pi^{-1}(V_iV_{i+1}) \cap \pi^{-1}(V_{i+1}V_{i+2}) = \emptyset$. So $\pi^{-1}(E)$ is a wedge of $S^2$. Hence the corollary follows.
\end{proof}

\subsection{Cobordism of $L(p; q_1, q_2)$, that is when $n=2$}\label{lpqq}

\mbox{}\\

Consider a $2$-simplex $\Delta^2$ with vertices $v_0, v_1, v_2$ and edges $f_0=v_1v_2,
f_1=v_0v_2$ and $f_2=v_0v_1$. Let $p, q_1$ and $q_2$ be integers with $0 < q_1, q_2 < p$
such that $\gcd\{p, q_i\} =1$ for $i=1, 2$. Then the function $\xi: \mathcal{F}(\Delta^2)
\to \ZZ^3$ defined by $$\xi(f_0)= (-q_1, -q_2, p), \quad \xi(f_1)=(1,0,0) \quad \mbox{and} \quad \xi(f_2)= (0,1,0)$$
is a hyper characteristic function on $\Delta^2$ (see Figure \ref{len901} $(b)$) and
the manifold $L(\Delta^2, \xi) $ is the lens space $ L(p; q_1, q_2)$, see Section~\ref{quotient space}.

Consider a $3$-simplex $\Delta^3$ with vertices $\{V_0, V_1, V_2, V_3\}$ and facets
$\{F_i=V_0\ldots \widehat{V_i} \ldots V_3\}$ opposite to the vertex $V_i$ for $i =0, 1, 2, 3$.
We want to extend the hyper characteristic function $\xi$ of $\Delta^2$ to a rational
characteristic function $\eta: \mathcal{F}(\Delta^3) \to \ZZ^3$ on $\Delta^3$ (see Figure \ref{len903} (1)) satisfying $$\eta(F_0)=\xi(f_0) =(-q_1, -q_2, p), \quad \eta(F_1)=\xi(f_1)=(1,0,0),$$
$$ \eta(F_2)=\xi(f_2)=(0, 1, 0) \quad \mbox{and}~ \quad \eta(F_3) = (a, b, c)$$ such that $$\partial{W(\Delta^3_V, \eta)} = \bigsqcup_{i=0}^3 (T^3 \times \Delta^2_{i})/\sim_b$$ where 
$$(T^3 \times \Delta^2_{3})/\sim_b ~~~~~\cong L(p; q_1, q_2) \quad \mbox{and} \quad (T^3 \times \Delta^2_{i})/\sim_b ~~~\cong L(p_i; q_{i_1}, q_{i_2})$$ with $p_i < p$ for $i=0, 1, 2$.

We then apply the similar procedure to each lens spaces $L(p_i; q_{i_1}, q_{i_2})$, i.e., to the induced hyper characteristic function on $\Delta^2_{i}$, until we get a $T^3$-equivariant cobordism of $L(p; q_1, q_2)$ to the sum of $L(1; 1, 1) \cong S^5$ which is $T^3$-equivariantly a boundary.

The above procedure depends on the choice of the vector $\eta(F_3) = (a,b,c) \in \ZZ^3$. Next we formulate the existence of such vector in the following question. Let $$\mathcal{L}(3)= \{(-q_1, -q_2, p) \in \ZZ^3 \mid \gcd\{p, q_1\} = \gcd\{p, q_2\}=1 ~ \mbox{and} ~ 0 \leq  q_1, q_2 < p\}.$$

\begin{ques}\label{ques1}
For a given $\eta_0 = (-q_1, -q_2, p) \in \mathcal{L}(3)$, and $\eta_1 = (1, 0, 0)$ and $\eta_2 = (0, 1, 0)$
does there exist $\eta_3 = (-a, -b, c) \in \ZZ^3$ such that $0\leq a, b \leq c$, $1 \leq c$ satisfying the following: 
\begin{enumerate}
\item $\{\eta_i, \eta_j\}$ forms a part of a basis of $\ZZ^3$ for any distinct $i,j \in \{0, 1, 2,3\}$,

\item $0 <|\det \eta_{ijk}| < p$ for any distinct  $i, j, k \in \{0, 1, 2, 3\}$ and $\{i, j, k\} \neq \{0, 1, 2\}$ where $\eta_{ijk}$ is the $3 \times 3$ matrix with $\eta_i, \eta_j, \eta_k$ as its row vectors.
\end{enumerate}
\end{ques}

Note that the condition (1) in the above Question \ref{ques1} is equivalent to 
\begin{equation}\label{rach1}
 \gcd\{a, c\}=1, ~ \gcd\{b, c\}=1~\mbox{and} ~ \gcd\{bp- cq_2, ap- cq_1, bq_1-aq_2\}=1.
\end{equation}
In Question \ref{ques1}, the condition (2) is equivalent to 
\begin{equation}
c < p, \quad |ap-cq_1| < p \quad \mbox{and} \quad |bp-cq_2| < p.
\end{equation}

So we can restate Question \ref{ques1} into the following purely number theoretical question:

\begin{ques}
For a given vector $(-q_1, -q_2, p) \in \mathcal{L}(3)$, do there exist integers $a, b, c \in \ZZ$ such that
\begin{enumerate}
\item $\gcd\{a, c\}=1, ~ \gcd\{b, c\}=1~\mbox{and} ~ \gcd\{bp- cq_2, ap- cq_1, bq_1-aq_2\}=1,$

\item $c < p, \quad |ap-cq_1| < p \quad \mbox{and} \quad |bp-cq_2| < p.$
\end{enumerate}
\end{ques}

\begin{obsr}\label{obsr1}
Suppose $\eta_i, \eta_j, \eta_k$ are vectors as in condition (2) of Question \ref{ques1}. Then we can always find $A \in GL(3, \ZZ)$ such that $A(\eta_i) = (-q_1^{\prime}, -q_2^{\prime}, p^{\prime})$ with $0 < p^{\prime} < p$, $A(\eta_j) = (1, 0, 0)$ and $A(\eta_k) = (0, 1, 0)$. 

Indeed, by condition (1) of Question \ref{ques1}, $\eta_j, \eta_k$ is a part of a basis of $\ZZ^3$. Extend it to a basis $\{\eta_j, \eta_k, \eta_{\ell}\}$. So $\eta_i = r_j \eta_j + r_k \eta_k + r_{\ell} \eta_{\ell}$ for unique $r_j, r_k, r_{\ell} \in \ZZ$. Then $r_{\ell} \neq 0$, since $0 < |\det \eta_{ijk}|$. Then define $A : \ZZ^3 \to \ZZ^3$ to be the linear map determined by $$A(\eta_j) =(1, 0, 0), ~~ A(\eta_k) = (0, 1, 0), ~~ A(\eta_{\ell}) = \epsilon (0, 0, 1)$$ where $\epsilon = 1 $ if $ r_{\ell} > 0$ and $\epsilon = -1 $ if $ r_{\ell} < 0$.  Then $A \in GL(3, \ZZ)$ and $|\det A| =1$, and if we let $(-q_1^{\prime}, -q_2^{\prime}, p^{\prime}) = (r_j, r_k, \epsilon r_{\ell})$ then $A(\eta_i) = (-q_1^{\prime}, -q_2^{\prime}, p^{\prime})$ and $p^{\prime} > 0$. That is, $A$ satisfies the desired properties. 

Now by condition (2) of Question \ref{ques1},
$$p^{\prime} = |\det B| = |\det A^{-1} \eta_{ijk}| = |\det A^{-1}| |\eta_{ijk}| = |\det \eta_{ijk}| < p$$
where $B$ is the $3 \times 3$ matrix with $(1, 0, 0)$, $(0, 1, 0)$, $(-q_1^{\prime}, -q_2^{\prime}, p^{\prime})$ as its row vectors. 

Moreover, since $A \in GL(3, \ZZ)$, the condition (1) of Question \ref{ques1} implies that any two row vectors of $B$ form a part of a basis of $\ZZ^3$, which means that
$$\gcd \{p^{\prime}, q_1^{\prime}\} = \gcd \{p^{\prime}, q_2^{\prime}\} = 1.$$ 

Since the lens space $L(p^{\prime}; q_1^{\prime}, q_2^{\prime})$ is determined by the number $p^{\prime} \in \NN$ and $\mod p^{\prime}$ classes $[q_1^{\prime}]$ and $[q_2^{\prime}]$, one can replace $q_i^{\prime}$ by $q_i^{\prime} + k p^{\prime}$, if necessary, so that we may assume $0 \leq q_1^{\prime}, q_2^{\prime} < p^{\prime}$. This implies that $(-q_1^{\prime}, -q_2^{\prime}, p^{\prime}) \in \mathcal{L}(3)$.
\end{obsr}

\begin{remark}
At this moment, the authors do not know whether Question \ref{ques1} has the positive answer. If there is an algorithm to find such $(-a,-b,c)$ as in Question \ref{ques1} for any $(-q_1, -q_2, p) \in \mathcal{L}(3)$, then we can explicitly construct an oriented 6-dimensional $T^3$-manifold with boundary $L(p; q_1, q_2)$ as we will see in Theorem \ref{eqcob3}.
\end{remark}

 Let 
\begin{align*} 
 \mathfrak{N}:= max\big\{& p \in \NN ~~\big|~~ \mbox{for any} ~ (-q_1^{\prime}, -q_2^{\prime}, p^{\prime}) \in \mathcal{L}(3) ~~\mbox{with} ~~ p^{\prime} \leq p ~\mbox{Question}~ \ref{ques1}~\\
  & \mbox{has the positive answer} \big\}.
 \end{align*}
Note that Question \ref{ques1} has the positive answer for arbitrary $(-q_1, -q_2, p) \in \mathcal{L}(3)$ if and only if $\mathfrak{N} = \infty$. It is checked that Question \ref{ques1} has the positive answer for $(-q_1, -q_2, p) \in \mathcal{L}(3)$ with $0 < p \leq 50$, i.e., $\mathfrak{N} \geq 50$.

\begin{theorem}\label{eqcob3}
The lens space $L(p; q_1, q_2)$ is $T^3$-equivariantly the boundary of an oriented manifold if $(-q_1, -q_2, p) \in \mathcal{L}(3)$ with $p \leq \mathfrak{N}$.
\end{theorem}

\begin{proof}
Let $(-q_1, -q_2, p) \in \mathcal{L}(3)$ with $p \leq \mathfrak{N}$ and $\Delta^3$ be a $3$-simplex with the set of vertices $\{V_0, V_1, V_2, V_3\}$. Let $F_i$ be the facet of
$\Delta^3$ opposite to the vertex $V_i$ for $i=0,1,2,3$. From the definition of $\mathfrak{N}$, one can
conclude that there exists $(-a,-b,c) \in \mathcal{L}(3)$ which satisfy all conditions of Question \ref{ques1}. Define the function $\eta: \mathcal{F}(\Delta^3) \to \ZZ^3$ by $$\eta(F_0) =(-q_1, -q_2, p), ~~
\eta(F_1)=(1,0,0), ~~ \eta(F_2)=(0, 1, 0) ~~\mbox{and}~~ \eta(F_3) = (-a, -b, c).$$ Then $\eta$ is a rational
characteristic function on $\Delta^3$. By Lemma \ref{lpbdd} we get an oriented $T^3$-manifold  
$W(\Delta^3_V, \eta)$ with boundary. The boundaries of $W(\Delta_V^3, \eta)$ are the lens spaces
$(T^3 \times \Delta^2_{i})/\sim_b$ for $i=0,1,2,3$ where $\sim_b$ is the equivalence relation
in (\ref{equieta}). The simple polytope $\Delta_V^3$ and the rational characteristic
function $\eta$ are given in the Figure \ref{len903}.

\begin{figure}[ht]
\centerline{
\scalebox{0.74}{
\input{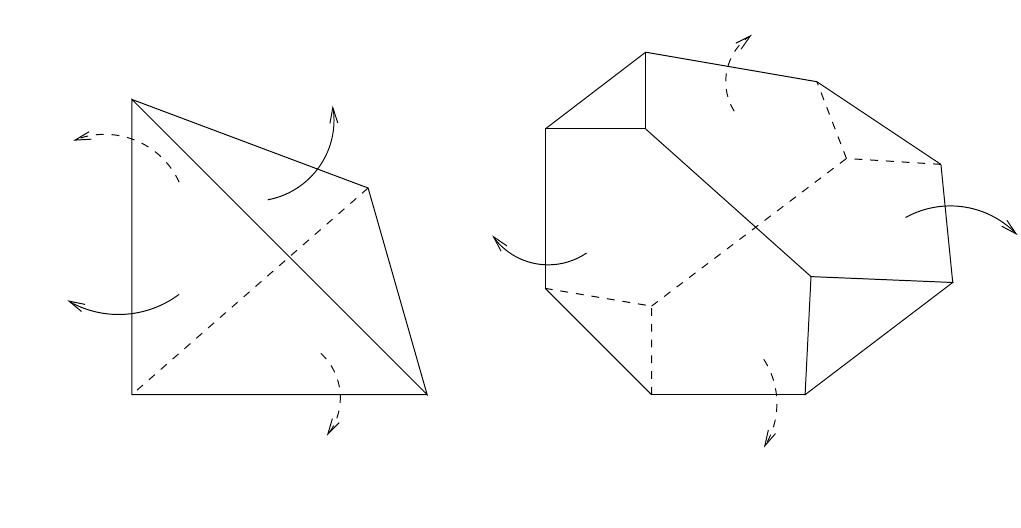_t}
 }
 }
\caption {$\Delta^3$ and vertex cut $\Delta_V^3$ of $\Delta^3$.}
\label{len903}
\end{figure}
Let $\Delta^2_{i}$ be the facet of $\Delta_V^3$ obtained from the
vertex cut of $\Delta^3$ at the vertex $V_i$. Then the vertices of $\Delta^2_i$ are given by $V_{i_j}=\Delta^2_{i}\cap F_k \cap F_l$ with $\{i, j, k,l\} = \{0,1,2,3\}$. Define $$\xi^i: \mathcal{F}(\Delta^2_{i}) \to \ZZ^3 \quad \mbox{by} \quad \xi^i(V_{i_j}V_{i_k})= \eta(F_l)$$ where $\{i,j,k,l\} =\{0, 1,2,3\}$. Then for each $i \in \{0, 1,2,3\}$, $\xi^i$ is a hyper characteristic function on $\Delta^2_{i}$.\\

When $i=0$, then $$\rm{Im}(\xi^0) =\{\eta(F_1), \eta(F_2), \eta(F_3)\}= \{(1, 0, 0), (0, 1, 0), (-a, -b, c)\}.$$ Hence, by Section \ref{quotient space}, the boundary component $L(\Delta^2_{0}, \xi^0) =(T^3 \times \Delta^2_{0})/\sim_b$ of $W(\Delta^3_V, \eta)$ is the lens space $L(c; a, b)$ with the natural action $\delta_0$ of $T^3$. Note that $c < p$.\\

When $i=1$, then $\rm{Im}(\xi^1)=\{\eta(F_0), \eta(F_2), \eta(F_3)\}$ and by condition (2) of Question \ref{ques1} $$0 <|\det[\eta(F_0), \eta(F_2),
\eta(F_3)]|= |a p + cq_1| < p.$$ The Observation \ref{obsr1} implies that there is an automorphism $A_1$ of $\ZZ^3$ over $\ZZ$, which induces an automorphism $\delta_1$ of $T^3$, such that 
\begin{equation}\label{hypch1}
A_1(\eta(F_0)) = (-q_{1_1}^{\prime}, -q_{1_2}^{\prime}, p_1), \quad A_1(\eta(F_2)) = (1,0, 0) \quad \mbox{and} \quad A_1(\eta(F_3)) = (0, 1,0)
\end{equation}
 where $p_1 > 0$. Since $|\det(A_1)| =1$, $p_1 =|a p + cq_1| < p$. Let
$q_{1_1}^{\prime} \equiv q_{1_1} \mod (p_1)$ and $ q_{1_2}^{\prime} \equiv q_{1_2} \mod (p_1)$ where $0 \leq q_{1_1}, q_{1_2} \leq p_1$. The composition $A_1 \circ \xi^1$ is a hyper characteristic function on $\Delta^2_{1}$. Then from \eqref{hypch1} and definition of hyper characteristic function, one can easily show that $$\gcd\{p_1, q_{1_1}\}= 1 = \gcd\{p_1, q_{1_2}\}.$$ Hence the boundary component $L(\Delta^2_{1}, \xi^1) = (T^3 \times \Delta^2_{1})/\sim_b$ of $W(\Delta^3_V, \eta)$ is $\delta_1$-equivariantly homeomorphic to the lens space $L(p_1; q_{1_1}, q_{1_2})$, where $p_1< p < \mathfrak{N}$.\\

When $i=2$, $\rm{Im}(\xi^2)=\{\eta(F_0), \eta(F_1), \eta(F_3)\}$. By condition (2) of Question \ref{ques1}
 $$0 < |\det[\eta(F_0), \eta(F_1), \eta(F_3)]|= |bp - cq_2| < p.$$ The Observation \ref{obsr1} implies that there is an automorphism $A_2$ of $\ZZ^3$ over $\ZZ$, which induces an automorphism $\delta_2$ of $T^3$, such that 
\begin{equation}\label{hypch2}
A_2(\eta(F_0)) = (-q_{2_1}^{\prime}, -q_{2_2}^{\prime}, p_2), \quad A_2(\eta(F_1)) = (1,0, 0) \quad \mbox{and} \quad A_2(\eta(F_3)) = (0, 1,0)
\end{equation}
 where $p_2 > 0$. Since $|\det(A_2)| =1$, $p_2 =|b p + cq_2| < p$. Let
$q_{2_1}^{\prime} \equiv q_{2_1} \mod (p_2)$ and $ q_{2_2}^{\prime} \equiv q_{2_2} \mod (p_2)$ where $0 \leq q_{2_1}, q_{2_2} \leq p_2$. The composition $A_2 \circ \xi^2$ is a hyper characteristic function on $\Delta^2_{2}$. Then from \eqref{hypch2} and definition of hyper characteristic function, one can easily show that $$\gcd\{p_2, q_{2_1}\}= 1 = \gcd\{p_2, q_{2_2}\}.$$ Hence the boundary component $L(\Delta^2_{2}, \xi^2) = (T^3 \times \Delta^2_{2})/\sim_b$
of $W(\Delta^3_V, \eta)$ is $\delta_2$-equivariantly homeomorphic to the
lens space $L(p_2; q_{2_1}, q_{2_2})$, where $p_2 < p < \mathfrak{N}$.\\


When $i=3$, 
$${\rm{Im}}(\xi^3) = \{\eta(F_0), \eta(F_1), \eta(F_2)\} = \{(1, 0, 0), (0,1,0), (-q_1, -q_2, p)\}.$$ Hence, by Section \ref{quotient space}, the boundary component $L(\Delta^2_{3}, \xi^3) =(T^3 \times \Delta^2_{3})/\sim_b$ of $W(\Delta^3_V, \eta)$ is the lens space $L(p, q_{1}, q_{2})$ with the natural action $\delta_3$ of $T^3$.\\

From the above discussion we get that $L(p; q_1, q_2)$ is $T^3$-equivariant cobordant to $$L(c; a, b) \sqcup L(p_1; q_{1_1}, q_{1_2}) \sqcup L(p_2; q_{2_1}, q_{2_2})$$ where $0 < c, p_1, p_2 < p$. That is, 
\begin{equation}\label{cobeq}
 [L(p; q_1, q_2)]_{\delta_3} = [L(c; a, b)]_{\delta_0} + [L(p_1; q_{1_1}, q_{1_2})]_{\delta_1} + [L(p_2; q_{2_1}, q_{2_2})]_{\delta_2}
\end{equation}
 where $0 < c, p_1, p_2 < p$ and $\delta_i$'s are the corresponding actions of $T^3$ on the respective lens spaces.

 Since $c, p_1, p_2 < \mathfrak{N}$, we may continue the previous constructive technique on the lens spaces of the right hand side of \eqref{cobeq}, to show that $[L(p; q_1, q_2)]_{\delta_3}$ is zero. Actually, we can construct an oriented $T^3$-manifold with boundary
where the boundary is the lens space $L(p; q_1,q_2)$ by gluing the successive corresponding boundaries via
orientation preserving equivariant maps.
\end{proof}

\begin{corollary}
The lens space $L(p; q_1, q_2)$ is $T^3$-equivariantly the boundary of an oriented manifold if  two
integers $q_1$ and  $q_2$ are relatively prime and $q_1, q_2 \leq \mathfrak{N}$.
\end{corollary}
\begin{proof}
We may assume $q_1, q_2 > 0$.
Since $q_1 $ and $ q_2$ are relatively prime, then $(-a, -b, c) = (0, 0, 1)$ satisfy all the conditions in Question \ref{ques1}. Hence arguing similarly as the proof of Theorem \ref{eqcob3}, one can show that
$$ [L(p; q_1, q_2)]_{\delta_3} = [L(1; 0, 0)]_{\delta_0} + [L(p_1; q_{1_1}, q_{1_2})]_{\delta_1} + [L(p_2; q_{2_1}, q_{2_2})]_{\delta_2}$$ where $p_1 = |ap+cq_1|=q_1$ and $p_2 = |bp+cq_2|=q_2$. Note that $L(1; 0,0) = S^5$. So $L(1;0,0)$ is $T^3$-equivariantly oriented boundary. Since $0 \leq q_1, q_2 \leq \mathfrak{N}$, Theorem \ref{eqcob3} implies that $L(p_1; q_{1_1}, q_{1_2})$ and $L(p_2; q_{2_1}, q_{2_2})$ are $T^3$-equivariantly oriented boundary. Therefore the corollary follows.
\end{proof}

\begin{corollary}
The lens space $L(p; q_1, q_2)$ is $T^3$-equivariantly the boundary of an oriented manifold if $q_2=kq_1$ and
$q_1, p-q_2 \leq \mathfrak{N}$.
\end{corollary}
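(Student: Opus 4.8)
The plan is to imitate the proof of Theorem~\ref{eqcob3} and then run an induction on the modulus ``$p$''. Since $q_2=kq_1$ and $\gcd(p,q_1)=1$, we have $\gcd(q_1,p-q_2)=\gcd(q_1,p)=1$, so condition~(3) of Question~\ref{ques1} holds for $(q_1,q_2,p)$. As in the Remark following Question~\ref{ques1}, I would therefore take $\epsilon_1=0$, $\epsilon_2=-1$, so that setting $\eta(F_3)=(0,-1,1)$ makes $\eta\colon\mathcal F(\bigtriangleup^3)\to\ZZ^3$ a rational characteristic function extending the hyper characteristic function of $\bigtriangleup^2$ for $L(p;q_1,q_2)$, and then apply Lemma~\ref{lpbdd} to the vertex cut to obtain an oriented $T^3$-manifold $W(\bigtriangleup^3_V,\eta)$ with $\partial W(\bigtriangleup^3_V,\eta)=\bigsqcup_{i=0}^3 L(\bigtriangleup^2_{V_i},\xi^i)$, where $L(\bigtriangleup^2_{V_3},\xi^3)\cong L(p;q_1,q_2)$.

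Next I would dispose of the other three boundary components. Exactly as in Theorem~\ref{eqcob3}, $L(\bigtriangleup^2_{V_0},\xi^0)$ is weakly equivariantly $S^5$, hence an equivariant boundary. For $i=1$ one has $|\det[\eta(F_0),\eta(F_2),\eta(F_3)]|=q_1$, and tracking the characteristic vectors as in the $i=1$ case of Theorem~\ref{eqcob3} gives $L(\bigtriangleup^2_{V_1},\xi^1)\cong L(q_1;q_{1_1},q_{1_2})$ with $q_{1_1}\equiv p$ and $q_{1_2}\equiv q_2-p\equiv -p\pmod{q_1}$ (here $q_1\mid q_2$ is used); thus $q_{1_1}+q_{1_2}=q_1$, so $(q_{1_1},q_{1_2},q_1)\in A(2q_1)$, and since $q_1<p$ we get $2q_1<p+q_1<\mathfrak{N}$, whence by Theorem~\ref{eqcob3} this component is a $T^3$-equivariant oriented boundary. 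For $i=2$ one has $|\det[\eta(F_0),\eta(F_1),\eta(F_3)]|=p-q_2$, and $L(\bigtriangleup^2_{V_2},\xi^2)\cong L(p-q_2;a,b)$ with $a\equiv q_1$, $b\equiv -q_2\pmod{p-q_2}$. Hence in the equivariant cobordism group $[L(p;q_1,q_2)]=[L(p-q_2;a,b)]$, and the modulus has strictly dropped from $p$ to $p-q_2$.

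Finally I would iterate. If the parameter sum of the current lens space is $\le\mathfrak{N}$ then Theorem~\ref{eqcob3} finishes it; otherwise I would repeat the vertex-cut construction. The point to exploit is that the ``small'' parameter is non-increasing under this reduction (it starts at $q_1$), and the two non-trivial lens spaces split off at each step have strictly smaller modulus; so every split-off piece whose modulus equals its first parameter has parameter sum $<3q_1$, which is $<p+q_1<\mathfrak{N}$ provided $2q_1<p$, and is therefore killed by Theorem~\ref{eqcob3}. The strictly decreasing modulus forces the process to terminate at $L(1;\ast)=S^5$. In the only remaining situation $2q_1\ge p$, the hypothesis $q_2=kq_1<p$ forces $k=1$ and $q_1=q_2$; here I would first replace $L(p;q_1,q_1)$ by the weakly equivariantly diffeomorphic $L(p;1,q_1^{-1})$ (Lemma~\ref{lenscla}), after which the reduction preserves the form $L(N;1,m)$ and splits off only copies of $S^5$, so again termination is forced by the decreasing modulus. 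Chaining the resulting cobordisms equivariantly (transitivity of $T^3$-cobordism) then exhibits $L(p;q_1,q_2)$ as the boundary of an oriented $T^3$-manifold.

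The hard part is exactly this last step: one must control how the parameters evolve under repeated vertex-cut reductions and verify that at every stage either the parameter sum has fallen into the range $\le\mathfrak{N}$, where Question~\ref{ques1} (and hence Theorem~\ref{eqcob3}) applies, or one of the four conditions of Question~\ref{ques1} provably holds for the current lens space so that the reduction can be carried out; this is precisely where the divisibility $q_1\mid q_2$ and the bound $p+q_1<\mathfrak{N}$ are used in an essential way. Everything else is mechanical and copies the argument of Theorem~\ref{eqcob3}.
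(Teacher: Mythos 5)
Your first step coincides with the paper's: since $q_1\mid q_2$ and $\gcd(p,q_1)=1$ you get $\gcd\{q_1,p-q_2\}=1$, you take $\eta(F_3)=(0,-1,1)$, apply Lemma~\ref{lpbdd} to $\bigtriangleup^3_V$, and identify the boundary components as $S^5$, $L(p;q_1,q_2)$, a lens space with modulus $q_1$, and a lens space with modulus $p-q_2$; your treatment of the modulus-$q_1$ piece (parameter sum $2q_1<p+q_1<\mathfrak N$, so Theorem~\ref{eqcob3} applies) is also the intended one. The genuine gap is in how you dispose of the modulus-$(p-q_2)$ piece. You record it as $L(p-q_2;a,b)$ and then, instead of checking its parameter sum, you set up an open-ended iteration of vertex cuts whose convergence you explicitly leave unresolved (``the hard part''). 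But the hypothesis $p+q_1<\mathfrak N$ is there precisely to make this piece terminal: its parameters are congruent to $q_1$ and $q_2$ (up to sign, which is absorbed by a $\delta$-translation of the torus) modulo $p-q_2$, both coprime to $p-q_2$ because $\gcd(p,q_1)=\gcd(p,q_2)=1$ and $q_1\mid q_2$, and after reducing them into $(0,p-q_2)$ the sum is at most $q_1+q_2+(p-q_2)=p+q_1<\mathfrak N$. Hence Theorem~\ref{eqcob3} applies to it directly, $[L(p;q_1,q_2)]_{\delta_3}=[L(q_1;\cdot,\cdot)]_{\delta_1}+[L(p-q_2;\cdot,\cdot)]_{\delta_2}$, and the proof closes after a single reduction; this is the paper's argument.

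As written, your fallback iteration does not repair the omission: after the first cut the divisibility relation $q_1\mid q_2$ need not persist, so none of the four conditions of Question~\ref{ques1} is guaranteed for the subsequent lens spaces, and your assertions that the small parameter is non-increasing, that split-off pieces have parameter sum $<3q_1$, and that the form $L(N;1,m)$ is preserved in the case $2q_1\ge p$ (where you also invoke an unjustified weakly equivariant replacement by $L(p;1,q_1^{-1})$) are unproven. So the proposal is incomplete exactly where the hypothesis $p+q_1<\mathfrak N$ was meant to be used; inserting the one-line parameter-sum estimate for the $p-q_2$ component makes the whole iteration unnecessary.
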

\begin{proof} We may assume $q_1$ is positive. 
 Since $q_2= kq_1$ and $(-q_1, -q_2, p) \in \mathcal{L}(3)$, then $(-a, -b, c) = (0, -1, 1)$ satisfy all the conditions in Question \ref{ques1}. Hence arguing similarly as the proof of Theorem \ref{eqcob3}, one can show that
$$ [L(p; q_1, q_2)]_{\delta_3} = [L(1; 1, 0)]_{\delta_0} + [L(p_1; q_{1_1}, q_{1_2})]_{\delta_1} + [L(p_2; q_{2_1}, q_{2_2})]_{\delta_2}$$ where $p_1 = |ap+cq_1|=q_1$ and $p_2 = |bp+cq_2|=p -q_2$. Note that $L(1; 1,0) \cong S^5$. So $L(1;1,0)$ is $T^3$-equivariantly oriented boundary. Since $0 < q_1, p-q_2 \leq \mathfrak{N}$, Theorem \ref{eqcob3} implies that $L(p_1; q_{1_1}, q_{1_2})$ and $L(p_2; q_{2_1}, q_{2_2})$ are $T^3$-equivariantly oriented boundary. Therefore the corollary follows.
\end{proof}

\begin{corollary}
The lens space $L(p; q_1, q_2)$ is $T^3$-equivariantly the boundary of an oriented manifold if  two
integers $p-q_1$ and  $p-q_2$ are relatively prime and $p-q_1, p-q_2 \leq \mathfrak{N}$.
\end{corollary}
\begin{proof}
We may assume $q_1, q_2 > 0$.
Since $p-q_1 $ and $p- q_2$ are relatively prime, then $(-a, -b, c) = (-1, -1, 1)$ satisfy all the conditions in Question \ref{ques1}. Hence arguing similarly as the proof of Theorem \ref{eqcob3}, one can show that
$$ [L(p; q_1, q_2)]_{\delta_3} = [L(1; 0, 0)]_{\delta_0} + [L(p_1; q_{1_1}, q_{1_2})]_{\delta_1} + [L(p_2; q_{2_1}, q_{2_2})]_{\delta_2}$$ where $p_1 = |ap+cq_1|=p-q_1$ and $p_2 = |bp+cq_2|=p-q_2$. Note that $L(1; 1,1) \cong S^5$. So $L(1;1,1)$ is $T^3$-equivariantly oriented boundary. Since $0 \leq q_1, q_2 \leq \mathfrak{N}$, Theorem \ref{eqcob3} implies that $L(p_1; q_{1_1}, q_{1_2})$ and $L(p_2; q_{2_1}, q_{2_2})$ are $T^3$-equivariantly oriented boundary. Therefore the corollary follows.
\end{proof}

\subsection{Cobordism of $L(p; q_1, \ldots, q_n)$ when $n > 2$}\label{lpqqq}

\mbox{}\\

Let $\Delta^n$ be an $n$-simplex with vertices $\{v_0, v_1, \ldots, v_n\}$,  and let $F_i$ be the facet of $\Delta^n$ which does not contain the vertex $v_i$ for $i=0, \ldots, n$. Let $\{e_i \mid i=1, \ldots, n+1\}$ be the standard basis of $\ZZ^{n+1}$. Define a function $$\xi \colon \{F_i \mid i=0, \ldots, n\} \to \ZZ^{n+1}$$ by $\xi(F_0)= (-q_1, \ldots, -q_n, p)$ and $\xi(F_i)=e_i$ for $i=1, \ldots, n$ where $p$ is relatively prime to each $q_i$ for $i=1, \ldots, n$ with $0 \leq  q_1, \cdots, q_n < p$. So $\xi$ is a hyper characteristic function on $\Delta^n$. Then from Section \ref{quotient space},  we get $L(\Delta^n, \xi) = L(p; q_1, \ldots, q_n)$.

 Let $$\mathcal{L}(n)=\{(-q_1, \ldots, -q_n, p) \in \ZZ^{n+1} \mid  \gcd\{p, q_i\} = 1 ~~ \mbox{for} ~~
i=1, \ldots, n ~~ \mbox{with}~ 0\leq  q_1, \ldots, q_n \leq p\}.$$
Analogous version of Question \ref{ques1} for $n > 2$ case is also valid, and if the answer to this question is positive, we can construct an oriented $T^{n+1}$-manifold with boundary $L(p; q_1, \ldots, q_n)$. That is, we can give an alternative proof of the following: Any lens space $L(p; q_1, \ldots, q_n)$ is a $T^{n+1}$-equivariantly oriented boundary. Similarly to the $n=2$ case, we ask the following Question.

\begin{ques}\label{ques2}
For a given $\eta_0 = (-q_1, \dots, -q_n, p) \in \mathcal{L}(n)$, and $\eta_i = e_i$ for $i=1, \ldots, n$, does there exist $\eta_{n+1} = (-a_1, \ldots -a_n, b) \in \ZZ^{n+1}$ with $0 \leq a_1, \ldots, a_n \leq b$ and $1 \leq b$ such that 
\begin{enumerate}
\item $\{\eta_0, \ldots, \widehat{\eta_i}, \ldots, \widehat{\eta_j}, \ldots, \eta_{n+1}\}$ forms a part of a basis of $\ZZ^{n+1}$ for any distinct $i,j \in \{0, \ldots, n+1\}$ with $i < j$, where \ $\widehat{}$ \ represents the omission of the corresponding entry,\\

\item $0 <|\det \eta_{0 \ldots \widehat{i} \ldots n+1}| < p$ for any $\{0 \ldots \widehat{i} \ldots n+1\} \subset \{0, \ldots, n+1\}$ and $i \neq n+1$ where $\eta_{0 \ldots \widehat{i} \ldots n+1}$ is the $n+1 \times n+1$ matrix with $\eta_0, \ldots, \widehat{\eta_i}, \ldots, \eta_{n+1}$ as its row vectors.\\
\end{enumerate}
\end{ques}

At this moment the authors do not know whether Question \ref{ques2} has the positive answer. Similarly to  subsection \ref{lpqq} one can restate the Question \ref{ques2} in the form of number theoretical question.

\begin{ques}
For a given vector $(-q_1, \ldots, -q_n, p) \in \mathcal{L}(n)$, do there exist integers $a_1, \ldots, a_n, b \in \ZZ$ such that
\begin{enumerate}
\item 
\begin{align*}
&\gcd\{a_i, b\}=1, \textrm{for} ~ i= 1, \ldots, n, \textrm{and}\\
& \gcd\{a_2p-bq_2, a_1p-bq_1, a_1q_2 - a_2q_1\}=1,\\
& \gcd\{a_3p-bq_3, a_1p-bq_1, a_1q_3-a_3q_1\}=1,\\
& \gcd\{a_4p-bq_4, a_1p-bq_1, a_1q_3-a_3q_1\}=1,\\
& ...\\
& \gcd\{a_np-bq_n, a_{n-1}p-bq_{n-1}, a_{n-1}q_n-a_nq_{n-1}\}=1.
\end{align*}
\item
$b < p, \quad \mbox{and} \quad |a_ip-bq_i| < p \quad \mbox{for} \quad i=1, \ldots, n$?
\end{enumerate}
\end{ques}

Suppose $\{\eta_{i_1}, \ldots, \eta_{i_{n+1}}\} = \{\eta_0, \ldots, \widehat{\eta_i}, \ldots, \eta_{n+1}\}$ are the vectors as in condition (2) of Question \ref{ques2}. Then we can always find $A \in GL(n, \ZZ)$ such that 
\begin{align*}
& A(\eta_{i_j}) = e_{j} ~~\mbox{for} ~~ 1 \leq j \leq n, ~~ \mbox{and}\\
& A(\eta_{i_{n+1}}) = (-q_1^{\prime}, \ldots, -q_n^{\prime}, p^{\prime}) ~~\mbox{with} ~~ 0 < p^{\prime} < p.
\end{align*}
The proof of this claim is similar to Observation \ref{obsr1}.
Let
\begin{align*}
\mathfrak{M}=max\{& p \in \NN ~\mid ~\mbox{for any} ~ (-q_1^{\prime}, \ldots, -q_n^{\prime}, p^{\prime}) \in \mathcal{L}(n) ~~\mbox{with} ~~ p^{\prime} \leq p ~~\mbox{the Question}~~ \ref{ques2}~\\
& \mbox{has a solution}\}.
\end{align*}

\begin{theorem}\label{eqcob4}
The lens space $L(p; q_1, \ldots, q_n)$ is $T^{n+1}$-equivariantly the boundary of an oriented manifold if
$(q_1, \ldots, q_n, p) \in \mathcal{L}(n)$ with $p \leq \mathfrak{M}$.
\end{theorem}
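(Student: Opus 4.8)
The plan is to follow the proof of Theorem~\ref{eqcob3} almost verbatim, with the $(n+1)$-simplex $\bigtriangleup^{n+1}$ replacing $\bigtriangleup^{3}$, and to run an induction on the complexity $m=q_1+\cdots+q_n+p$. So fix $(q_1,\ldots,q_n,p)\in\mathfrak{M}(m)$ with $m\le\mathfrak{M}$ and realize $L(p;q_1,\ldots,q_n)=L(\bigtriangleup^n,\xi)$ for the hyper characteristic function $\xi$ of Section~\ref{lpqqq}. Let $\bigtriangleup^{n+1}=V_0V_1\cdots V_{n+1}$ have facets $F_0,\ldots,F_{n+1}$ with $F_i$ opposite $V_i$, and regard $\bigtriangleup^n$ as the facet $F_{n+1}$. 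The first step is to pick $(\epsilon_1,\ldots,\epsilon_n)\in\{0,-1\}^n$ so that the function $\eta\colon\mathcal{F}(\bigtriangleup^{n+1})\to\ZZ^{n+1}$ with $\eta(F_0)=(-q_1,\ldots,-q_n,p)$, $\eta(F_i)=e_i$ for $1\le i\le n$ and $\eta(F_{n+1})=(\epsilon_1,\ldots,\epsilon_n,1)$ is a rational characteristic function of $\bigtriangleup^{n+1}$. Since $\eta(F_i)=e_i$ for $1\le i\le n$, expanding the minors occurring in Definition~\ref{diso} shows that this is equivalent to the solvability of the B\'ezout equations listed in Question~\ref{ques2}, so a legitimate choice exists because $m\le\mathfrak{M}$.

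The second step is to apply Lemma~\ref{lpbdd} to $(\bigtriangleup^{n+1},\eta)$: the vertex cut produces an oriented $(2n+2)$-dimensional $T^{n+1}$-manifold with boundary $W(\bigtriangleup^{n+1}_V,\eta)$, whose boundary is the disjoint union of the $(2n+1)$-dimensional generalized lens spaces $L(\bigtriangleup^n_{V_i},\xi^i)$ for $i=0,1,\ldots,n+1$, where $\xi^i(\bigtriangleup^n_{V_i}\cap F_k)=\eta(F_k)$ for $k\ne i$. I would then identify these $n+2$ boundary components. For $i=0$ the vectors $e_1,\ldots,e_n,(\epsilon_1,\ldots,\epsilon_n,1)$ in the image of $\xi^i$ form a basis of $\ZZ^{n+1}$, so by Proposition~\ref{genlensp} this component is weakly-equivariantly diffeomorphic to $S^{2n+1}$, a $T^{n+1}$-equivariant boundary. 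For $i=n+1$ the image vectors $(-q_1,\ldots,-q_n,p),e_1,\ldots,e_n$ have determinant $\pm p$, so this component is $L(p;q_1,\ldots,q_n)$ with its natural action. For $1\le i\le n$ the $n+1$ vectors in the image of $\xi^i$ span a full-rank sublattice of $\ZZ^{n+1}$ of index $|\epsilon_ip+q_i|\in\{q_i,\,p-q_i\}$, which lies strictly between $0$ and $p$ (in the inductive range $0<q_1\le\cdots\le q_n<p$, since $q_1=0$ would force $p=1$); an explicit automorphism $\delta_i$ of $T^{n+1}$ carries $\xi^i$ to the standard hyper characteristic function of a lens space, so by Lemma~\ref{lenscla} this component is $\delta_i$-equivariantly diffeomorphic to $L(|\epsilon_ip+q_i|;q_{i_1},\ldots,q_{i_n})$, the weights $q_{i_j}$ being obtained by reducing $p$ and the integers $\epsilon_{i'}p+q_{i'}$ $(i'\ne i)$ modulo $|\epsilon_ip+q_i|$ and then normalizing via Lemma~\ref{lenscla2}; after reordering, this tuple lies in $\mathfrak{M}(m_i)$, where $m_i=|\epsilon_ip+q_i|+\sum_j|q_{i_j}|$.

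The crux, and the only genuinely computational part, is the inequality $m_i<m$ for each $1\le i\le n$. This is the $n$-variable version of the estimates $k_1,k_2<q_1+q_2+p$ carried out in the proof of Theorem~\ref{eqcob3}: one writes each $q_{i_j}$ explicitly in terms of $q_1,\ldots,q_n,p$ and the $\epsilon$'s --- exactly as, for $n=2$, one has $q_{1_1}=q_1$ when $\epsilon_1=-1$ and $0<q_{1_1}<q_1$ when $\epsilon_1=0$ --- and then checks, case by case over the finitely many admissible $(\epsilon_1,\ldots,\epsilon_n)$, that $|\epsilon_ip+q_i|+\sum_j|q_{i_j}|<q_1+\cdots+q_n+p$, using $0<q_i<p$, the ordering $q_1\le\cdots\le q_n$, and the fact that one of the weights is a residue of $p$. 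Granting this, the induction on $m$ closes: the base cases are the small values of $m$, where $p=1$ and $L(p;q_1,\ldots,q_n)=S^{2n+1}$ bounds equivariantly; for the inductive step each $m_i<m\le\mathfrak{M}$, so $L(|\epsilon_ip+q_i|;q_{i_1},\ldots,q_{i_n})$ is a $T^{n+1}$-equivariant boundary by the inductive hypothesis, whence so is $L(\bigtriangleup^n_{V_i},\xi^i)$ after transporting the bounding manifold across $\delta_i$, and the $i=0$ component bounds as well. Gluing these bounding manifolds onto $W(\bigtriangleup^{n+1}_V,\eta)$ along the respective boundary components produces an oriented $T^{n+1}$-manifold whose only remaining boundary component is $L(p;q_1,\ldots,q_n)$, which is therefore $T^{n+1}$-equivariantly the boundary of an oriented manifold. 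The main obstacle is exactly this last round of bookkeeping: unlike the case $n=2$, one has to handle all $n+2$ boundary components and every admissible $\epsilon$-vector at once, verifying in each case both that $(|q_{i_1}|,\ldots,|q_{i_n}|,|\epsilon_ip+q_i|)$ is a genuine point of $\mathcal{L}(n)$ and that the complexity strictly decreases.
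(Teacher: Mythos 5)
Your proposal follows essentially the same route as the paper: extend $\xi$ to a rational characteristic function $\eta$ on $\bigtriangleup^{n+1}$ using an $\epsilon$-vector for which the B\'ezout equations of Question~\ref{ques2} are solvable, apply Lemma~\ref{lpbdd} to the vertex cut to get $W(\bigtriangleup^{n+1}_V,\eta)$ whose boundary components are $S^{2n+1}$, $L(p;q_1,\ldots,q_n)$, and lens spaces of strictly smaller complexity, then induct on $m$ and glue; in fact you make explicit (the minor expansion giving the equivalence with Question~\ref{ques2}, the inductive bookkeeping $m_i<m$, and the gluing step) points the paper only asserts with ``similarly as in Theorem~\ref{eqcob3}.'' Note also that your $|\epsilon_ip+q_i|$ is the correct reading of the paper's $|\epsilon_ip-q_i|$, consistent with the $n=2$ case.
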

\begin{proof}
The proof is similar to the proof of Theorem \ref{eqcob3}. So we briefly outline the proof of this theorem.
Let $\Delta^{n+1}$ be the $(n+1)$-dimensional simplex with vertices $V_0, V_1, ..., V_{n+1}$
and facets $F_0, \ldots, F_{n+1}$ where $F_i$ does not contain the vertex $V_i$ for $i=0, \ldots, n+1$.
Since $p \leq \mathfrak{M}$, from the definition of $\mathfrak{M}$, one can
conclude that there exists $(-a_1, \ldots, -a_n,b) \in \mathcal{L}(n)$ which satisfy all conditions of Question \ref{ques2}. Define a function $$\eta: \{F_0, \ldots, F_{n+1}\} \to \ZZ^{n+1}$$ by
 $$\eta(F_{0})=(-q_1, \ldots, -q_n, p), \eta(F_{n+1})= (-a_1, \ldots, -a_n, b)
~\mbox{and}~ \eta(F_i)=e_i ~\mbox{for}~ i=1, \ldots, n.$$ Since $p$ is relatively prime to each $q_i$ and $b$ is relatively prime to each $a_i$ for $i=1, \ldots, n$, the function $\eta$ is an rational characteristic function on $\Delta^{n+1}$. Hence by Lemma \ref{lpbdd}, $W(\Delta^{n+1}_V, \eta)$ is an oriented $T^{n+1}$-manifold with boundary where the boundaries are the generalized lens spaces $(T^{n+1} \times \Delta^n_{i})/\sim_b$ for $i=0, 1, \ldots, n+1$.

Note that the facets of $\Delta^n_{i}$ are $\{\Delta^n_{i} \cap F_j: j \neq i ~ \mbox{and}
~ j \in {0, \ldots, n+1}\}$. The restriction of $\eta$ on the facets of $\Delta^n_{i}$ is given by
$$\xi^i(\Delta^n_{i} \cap F_j) = \eta(F_j) ~ j \neq i ~ \mbox{and}~ j \in {0, \ldots, n+1}.$$
So $\xi^i$ is a hyper characteristic function on $\Delta^n_{i}$ and $(T^{n+1} \times \Delta^n_{i})/\sim_b
= L(\Delta^n_{i}, \xi^i)$. Similarly as in the proof of Theorem \ref{eqcob3}, we can show that
$$L(\Delta^n_{{n+1}}, \xi^{n+1}) \cong L(p; q_1, \ldots, q_n), \quad L(\Delta^n_{0}, \xi^{0})
\cong L(b; a_1, \ldots, a_n)$$ and $L(\Delta^n_{i}, \xi^i)$ is $\delta_i$-equivariantly homeomorphic to
the lens spaces $ L( p_i; q_{i_1}, \ldots, q_{i_n})$ for $i =1, \ldots, n$ where $0 \leq q_{i_1}, \cdots, 
q_{i_n} < p_i < p \leq \mathfrak{M}$. That is,
\begin{equation}\label{cobeq2}
 [L(p; q_1, \ldots, q_n)]_{\delta_n} = [L(b; a_1, \ldots, a_n)]_{\delta_0} + [L(p_1; q_{1_1}, \ldots, q_{1_n})]_{\delta_1} + \cdots + [L(p_n; q_{n_1}, \ldots, q_{n_n})]_{\delta_n}
 \end{equation}
 where $\delta_i$'s are the corresponding torus action. 

Since $b, p_1, \ldots, p_n < \mathfrak{M}$, continuing this process on the right hand side of \eqref{cobeq2}, we can show $L(p; q_1, \ldots, q_n)$ is $T^{n+1}$-equivariantly oriented boundary of an oriented manifold. Moreover, we can construct an oriented $T^{n+1}$-manifold with boundary where the boundary is the lens space $L(p; q_1, \ldots, q_n)$ by gluing the successive corresponding boundaries via orientation preserving equivariant maps.
\end{proof}

\begin{corollary}\label{main corollary}
The lens space $L(p; q_1, \ldots, q_n)$ is equivariantly the boundary of an oriented manifold if any two
integers of  $\{q_1, \ldots, q_n\} $ are relatively prime and $q_1, \ldots, q_n \leq \mathfrak{M}$.
\end{corollary}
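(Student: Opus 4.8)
The plan is to specialize the construction in the proof of Theorem~\ref{eqcob4}: pairwise coprimality of $q_1,\ldots,q_n$ is exactly what is needed to write down, in one stroke, an extension of the defining data of $L(p;q_1,\ldots,q_n)$ to the next simplex. After permuting the $q_i$ and reducing them modulo $p$ (this changes neither $\gcd(p,q_i)=1$ nor pairwise coprimality) I may assume $0\le q_1\le\cdots\le q_n<p$, so by Section~\ref{quotient space} $L(p;q_1,\ldots,q_n)=L(\bigtriangleup^n,\xi)$ with $\xi(F_0)=(-q_1,\ldots,-q_n,p)$ and $\xi(F_i)=e_i$. Regarding $\bigtriangleup^n$ as the facet $F_{n+1}$ of an $(n+1)$-simplex $\bigtriangleup^{n+1}=V_0\cdots V_{n+1}$, I would extend $\xi$ to $\eta\colon\mathcal F(\bigtriangleup^{n+1})\to\ZZ^{n+1}$ by $\eta(F_0)=(-q_1,\ldots,-q_n,p)$, $\eta(F_i)=e_i$ for $1\le i\le n$, and $\eta(F_{n+1})=e_{n+1}$.

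First I would check that $\eta$ satisfies Definition~\ref{diso}. An $\ell$-fold intersection of facets with $1\le\ell\le n$ uses a sub-multiset of $\{e_1,\ldots,e_{n+1}\}$, possibly together with $\eta(F_0)$; in the first case the vectors are plainly part of a basis, and in the second, after clearing the at most $n-1$ coordinates occupied by the $e$'s present, the surviving entries of $\eta(F_0)$ contain either $p$ and at least one $-q_j$, or at least two of the $-q_j$, so their gcd is $1$ by $\gcd(p,q_j)=1$ respectively $\gcd(q_j,q_k)=1$. Thus $\eta$ is a rational characteristic function of $\bigtriangleup^{n+1}$, and by Lemma~\ref{lpbdd} the space $W=W(\bigtriangleup^{n+1}_V,\eta)$ is an oriented $2(n+1)$-dimensional $T^{n+1}$-manifold whose boundary is the disjoint union of the generalized lens spaces $L(\bigtriangleup^n_{V_i},\xi^i)$, $i=0,1,\ldots,n+1$, with $\xi^i(\bigtriangleup^n_{V_i}\cap F_j)=\eta(F_j)$.

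Next I would identify each boundary component, just as in the proof of Theorem~\ref{eqcob3}. For $i=n+1$ one gets $L(\bigtriangleup^n_{V_{n+1}},\xi^{n+1})\cong L(p;q_1,\ldots,q_n)$ with its natural action, since $\mathrm{Im}(\xi^{n+1})=\{\eta(F_0),e_1,\ldots,e_n\}$. For $i=0$ one gets $L(\bigtriangleup^n_{V_0},\xi^0)\cong S^{2n+1}$ by Proposition~\ref{genlensp}, since $\mathrm{Im}(\xi^0)=\{e_1,\ldots,e_{n+1}\}$ is a basis; this sphere bounds a linear disk and is therefore $T^{n+1}$-equivariantly null cobordant. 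For $1\le i\le n$ the submodule generated by $\mathrm{Im}(\xi^i)$ has rank $n+1$ with quotient $\ZZ_{q_i}$, and after an automorphism of $T^{n+1}$ (Lemma~\ref{lenscla}) $L(\bigtriangleup^n_{V_i},\xi^i)$ is equivariantly diffeomorphic to a lens space $L(q_i;q^i_1,\ldots,q^i_n)$ with $q_i<p$, $0\le q^i_1\le\cdots\le q^i_n<q_i$, $\gcd(q_i,q^i_j)=1$, and $q^i_1,\ldots,q^i_n$ equal, up to sign and reordering, to the residues modulo $q_i$ of $q_1,\ldots,\widehat{q_i},\ldots,q_n,p$. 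Consequently $L(p;q_1,\ldots,q_n)$ is $T^{n+1}$-equivariantly null cobordant as soon as each of the $n$ lens spaces $L(q_i;q^i_1,\ldots,q^i_n)$ is, and every new modulus $q_i$ is strictly less than $p$.

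The main obstacle is to make this recursion terminate. The natural move is to induct on $q_1+\cdots+q_n+p$ (with base case $p=1$, where the lens space is $S^{2n+1}$), but reduction modulo $q_i$ destroys pairwise coprimality, so the descendant lens spaces need not satisfy the hypothesis of the corollary. One therefore needs a stronger inductive statement — a property of the parameter tuple that is inherited under the reduction above and still guarantees that the hyper characteristic function on $\bigtriangleup^n$ extends to a rational characteristic function on $\bigtriangleup^{n+1}$, i.e. the solvability asked for in Question~\ref{ques2}. I expect this requires allowing $\eta(F_{n+1})$ to be an arbitrary primitive vector with positive last coordinate (rather than one with entries in $\{0,-1\}$ as in Theorem~\ref{eqcob4}), which Definition~\ref{diso} permits, and then proving that such an extension exists for every tuple occurring in the reduction tree of a pairwise coprime tuple. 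That number-theoretic statement is the heart of the argument; the rest — Lemma~\ref{lpbdd}, Proposition~\ref{genlensp}, the identification of the boundary pieces, and the nullbordism of the sphere — is routine.
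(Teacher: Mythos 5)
Your construction is the same as the paper's: the paper proves this corollary by specializing Theorem~\ref{eqcob4} to $(\epsilon_1,\ldots,\epsilon_n)=(0,\ldots,0)$, i.e.\ $\eta(F_{n+1})=(0,\ldots,0,1)=e_{n+1}$, checks that pairwise coprimality makes $\eta$ a rational characteristic function, applies Lemma~\ref{lpbdd}, and identifies the boundary pieces as $L(p;q_1,\ldots,q_n)$, $S^{2n+1}$, and lens spaces $L(q_i;q_1^i,\ldots,q_n^i)$ whose entries are the residues of $q_j$ ($j\neq i$) and of $p$ modulo $q_i$, with $q_i<p$. Up to that point your proposal matches the paper step for step. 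The genuine gap is that you do not close the recursion: you explicitly leave as an unproved ``stronger inductive statement'' the fact that the descendant tuples again admit an extension, whereas the paper finishes in one line by asserting that any two integers of $\{q_1^i,\ldots,q_n^i,q_i\}$ are again relatively prime, so the corollary's hypothesis is inherited and the process terminates because the modulus (or the sum $q_1+\cdots+q_n+p$) strictly decreases. Since you name this step ``the heart of the argument'' but supply neither its precise formulation nor a proof, your proposal is not a complete proof of the corollary.

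That said, your hesitation at exactly this point is substantive rather than a failure of nerve: coprimality of residues does not follow formally from coprimality of the original integers. For instance $(q_1,q_2,q_3,p)=(3,5,13,16)$ satisfies the corollary's hypothesis, yet reducing at the vertex corresponding to $q_2=5$ produces the residues $3,3,1$ modulo $5$, which are not pairwise coprime; so the inheritance the paper's proof invokes needs more care (one must then fall back on the weaker solvability condition of Question~\ref{ques2}, e.g.\ by allowing other choices of $\epsilon$ or, as you suggest, a more general primitive vector for $\eta(F_{n+1})$). In short, you correctly isolated the crux on which the paper's induction rests, but because you resolve nothing there, the induction step — and hence the proof — is missing from your proposal.
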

\begin{proof}
Since any two integers of the set $\{q_1, \ldots, q_n, p\}$ are relatively prime, then the vector $(-a_1, \ldots, -a_n, b) = (0, \ldots, 0, 1)$ satisfies all conditions in Question \ref{ques2}. We consider $\eta(F_{n+1})= (0, \ldots, 0, 1)$. So form the proof of Theorem \ref{eqcob4}, we get  
$$ [L(p; q_1, \ldots, q_n)]_{\delta_n} = [L(1; 0, \ldots, 0)]_{\delta_0} + [L(p_1; q_{1_1}, \ldots, q_{1_n})]_{\delta_1} + \cdots + [L(p_n; q_{n_1}, \ldots, q_{n_n})]_{\delta_n}.$$
where $p_i = |a_ip-bq_i|=q_i < p < \mathfrak{M}$. Note that $L(1; 0, \ldots, 0) = S^{2n+1}$ and so $L(1;0, \ldots, 0)$ is $T^{n+1}$-equivariantly oriented boundary. Since $0 \leq q_1, \ldots, q_n < \mathfrak{M}$, Theorem \ref{eqcob4} implies that $L(p_i; q_{i_1}, \ldots, q_{i_n})$ is $T^{n+1}$-equivariantly oriented boundary. Therefore the corollary follows.
\end{proof}

\begin{remark}
The oriented non-equivariant cobordism class of $L(p; q_1, \ldots, q_n)$ is zero, since all the Stiefel-Whitney numbers of it are zero.
\end{remark}

{\bf Acknowledgement.} 
The authors thanks Richard K. Guy for helpful discussion regarding Question \ref{ques1}.
 The first author thanks Korea Advanced Institute of Science and Technology, Pacific Institute for Mathematical Sciences, University of Regina, and University of Calgary for financial support.

\renewcommand{\refname}{References}

\bibliographystyle{alpha}
\bibliography{bibliography.bib}

\vspace{1cm}

\vfill

\end{document}